\newcolumntype{L}{>{$}l<{$}} 
\newcolumntype{C}{>{$}c<{$}}
\newtheorem{theorem}{Theorem}[section]
\newtheorem{lemma}[theorem]{Lemma}
\newtheorem{cor}[theorem]{Corollary}
\newtheorem{prop}[theorem]{Proposition}
\newtheorem{setup}[theorem]{Setup}
\theoremstyle{definition}
\newtheorem{definition}[theorem]{Definition}
\newtheorem{example}[theorem]{Example}
\newtheorem{obs}[theorem]{Observation}
\newtheorem{notation}[theorem]{Notation}
\newtheorem{construction}[theorem]{Construction}
\theoremstyle{remark}
\newtheorem{remark}[theorem]{Remark}
\newtheorem{the context}[theorem]{The Context}
\newtheorem{question}[theorem]{Question}
\numberwithin{equation}{theorem}
\numberwithin{equation}{section}
\newcommand{\rank}{\operatorname{rank}}
\newcommand{\Ker}{\operatorname{Ker}}
\newcommand{\ideal}[1]{\mathfrak{#1}}
\newcommand{\m}{\ideal{m}}
\newcommand{\supp}{\operatorname{Supp}}
\newcommand{\bbz}{\mathbb{Z}}
\renewcommand{\geq}{\geqslant}
\renewcommand{\leq}{\leqslant}
\renewcommand{\ker}{\Ker}
\renewcommand{\hom}{\Hom}
\newcommand{\Hom}{\operatorname{Hom}}
\newcommand{\maps}[5]{\xymatrix{#1 \ar[r]^-{#3} & #2 \\
#4 \ar@{|->}[r] & #5 \\}}
\newcommand{\set}{\operatorname{set}}
\newcommand{\lin}{\operatorname{lin}}
\begin{document}
\title{Iterated Mapping Cones for Strongly Koszul Algebras}

\author{Keller VandeBogert}
\address{University of Notre Dame}
\keywords{Koszul algebras, iterated mapping cones, minimal free resolutions, linear strands}
\date{\today}

\maketitle

\begin{abstract}
    In this paper we extend the well-known iterated mapping cone procedure to monomial ideals in strongly Koszul algebras. We study properties of ideals generated by monomials in commutative Koszul algebras and show that the linear strand of ideals generated by linear forms is obtained as a subcomplex of the Priddy complex. In the case of strongly Koszul algebras, this shows that the minimal free resolution of a monomial ideal admitting linear quotients is obtained as an iterated mapping cone, immediately extending results for such ideals in polynomial rings to strongly Koszul algebras. We then consider monomial ideals admitting a so-called regular ordering, a generalization of regular decomposition functions, and show that the comparison maps in the iterated mapping cone construction can be computed explicitly. In particular, this gives a closed form for the minimal free resolution of monomial ideals admitting a regular ordering over strongly Koszul algebras.
\end{abstract}

\section{Introduction}

A $k$-algebra $A$ is \emph{Koszul} if its residue field $k$ has a linear minimal free resolution over $A$. Koszul algebras are one method of generalizing standard graded polynomial rings and make their appearance in a wide range of seemingly disconnected settings. Topologically, Koszul duality for quadratic algebras can be used to translate between facts about equivariant and standard cohomology (see \cite{goresky1997equivariant}). Representation theoretically, Koszul algebras are, in the words of Beilinson, Ginzburg, and Soergel, ``as close to semisimple as a $\bbz$-graded ring can possibly be" and arise in the context of mixed complexes/Hodge modules (see \cite{beilinson1996koszul}). In the setting of number theory, Positselski (see \cite{positselski2014galois}) has shown that certain classes of Milnor rings are Koszul algebras and relates the Milnor-Bloch-Kato conjecture to Koszulness of certain quotient rings. Fr\"oberg gives an overview in \cite{froberg1999koszul} of conditions equivalent to Koszulness along with many examples in the literature of Koszul algebras, including extremal Gorenstein rings and Veronese/Segre algebras. For a more complete exposition/overview of Koszul algebras, see \cite{polishchuk2005quadratic}.

For non-regular Koszul algebras, the minimal free resolution of the residue field is necessarily infinite, and this resolution can be explicitly described by the Priddy complex, introduced by Stewart Priddy in \cite{priddy1970koszul}. One interesting feature of this resolution is the association of the algebra $A$ to its quadratic dual, $A^!$. In the ``classical" case of a polynomial ring, $A$ is the symmetric algebra on some vector space and its quadratic dual is the exterior algebra; in particular the Priddy complex recovers the well-known Koszul complex. This leads to the idea that one can try to generalize constructions involving symmetric and exterior algebras by replacing these objects with Koszul algebras and their respective quadratic duals.

This is the approach taken by the authors in \cite{faber2020canonical} (and indeed was the original motivation for this paper), where the $L$-complex construction of Buchsbaum and Eisenbud (see \cite{buchsbaum1975generic}) was extended to Koszul algebras by replacing certain Schur modules associated to hook partitions with the image of the natural maps
$$A \otimes_k (A^!)^*_{i+1} \otimes_k A_{d-1}  \to A \otimes_k (A^!)^*_i \otimes_k A_d$$
induced by multiplication by the trace element. It is shown that the complexes induced by these building blocks constitute a minimal free resolution of powers of the maximal ideal of $A$, a direct generalization of the original case considered in \cite{buchsbaum1975generic}.

Powers of the maximal ideal are, in particular, monomial ideals, with the minimal generating set given by the classes of all monomials of a given degree (one has to be careful about what they mean by ``monomial ideal" in a general Koszul algebra; see Remark \ref{rk:MonIdeals}). Considering the combinatorial nature of many naturally occurring Koszul algebras (for instance, Hibi rings on posets, Stanley-Reisner rings of graphs, or (sometimes) Orlik-Solomon algebras associated to matroids), one is tempted to ask: what techniques relating to monomial ideals in polynomial rings generalize to arbitrary Koszul algebras?

In this paper, we begin to answer this question from a homological perspective by generalizing the well-known iterated mapping cone procedure for ideals with linear quotients (see \cite{herzog2002resolutions}) to \emph{strongly} Koszul algebras. The iterated mapping cone procedure spawns from the observation that the short exact sequence
$$0 \to \frac{A}{(I:m)} \xrightarrow[]{m} \frac{A}{I} \xrightarrow[]{} \frac{A}{I +(m)} \to 0$$
can be used to inductively compute a free resolution of any quotient ring; in the case that the ideal has linear quotients, this procedure will yield the \emph{minimal} free resolution. This recovers the well-known (squarefree) Eliahou-Kervaire resolution for (squarefree) stable ideals (see \cite{eliahou1990minimal} and \cite{aramova1998squarefree}), and extends to many more classes of monomial ideals. One hitch to this approach for general Koszul algebras is the fact that an ideal generated by a subset of variables may not have linear resolution (see Example \ref{ex:badKoszul}); this is remedied by restricting to \emph{strongly} Koszul algebras (see Definition \ref{def:strongKoszul}), originally introduced by Herzog, Hibi, and Restuccia in \cite{herzog2000strongly}.  

This paper is organized as follows. In Section \ref{sec:background} we introduce/recall some results on linear strands and Koszul algebras. Most importantly, we recall the construction of the quadratic dual of a quadratic algebra and use this to construct the Priddy complex. In Section \ref{sec:itMapCones}, we begin the process of extending results on monomial ideals in polynomial rings to Koszul algebras. In particular, we construct the linear strand of any ideal generated by linear forms as a subcomplex of the Priddy complex (see Proposition \ref{prop:varslinRes}); in the case of a strongly Koszul algebra, this yields the minimal free resolution explicitly. We then generalize the idea of a \emph{decomposition function} to general strongly Koszul algebras and show that the minimal free resolutions of ideals admitting linear quotients can be constructed as an iterated mapping cone. 

In Section \ref{sec:theMFRforRegular}, we consider cases for which the induced comparison maps in the iterated mapping cone procedure can be constructed explicitly. This leads to the definition of monomial ideals admitting a \emph{regular ordering} (see Definition \ref{def:regulrityDef}), a direct generalization of \emph{regular} decomposition functions as in \cite{herzog2002resolutions}. This culminates in Theorem \ref{thm:theMFR}, whose proof includes the original Theorem $1.12$ of \cite{herzog2002resolutions}, and shows that the minimal free resolution of any monomial ideal admitting a regular ordering in a strongly Koszul algebra can be computed explicitly in a manner that directly generalizes the case for polynomial rings. We conclude with further questions on different choices of decomposition for computing the comparison maps in the iterated mapping cone procedure and additional combinatorial structure on the resolution of Theorem \ref{thm:theMFR} arising from infinite-dimensional cell complexes.

\section{Koszul Algebras and The Priddy Complex}\label{sec:background}

In this section, we introduce some necessary background on linear strands and Koszul algebras that will be needed for the remainder of the paper. We record two results of \cite{herzog2015linear} that are useful for proving that a candidate complex does indeed arise as the linear strand of a module. After defining Kozul algebras, we introduce some machinery for defining the so-called Priddy complex. We conclude with the well-known connection between Koszul algebras and acyclicity of the associated Priddy complex, originally proved in \cite{priddy1970koszul}.

Throughout the paper, all complexes are assumed to be concentrated only in nonnegative degrees. All $k$-algebras will be assumed to be finitely generated and endowed with the standard grading (that is, all variables have degree $1$), and denoted by $A$. 

\begin{definition}\label{def:linStrand}
Let $F_\bullet$ be a minimal graded $A$-free complex with $F_0$ having initial degree $d$. Then the \emph{linear strand} of $F_\bullet$, denoted $F_\bullet^{\lin}$, is the complex obtained by restricting $d_i^F$ to the subcomplex generated by components $(F_i)_{d+i}$ for each $i \geq 1$.
\end{definition}

\begin{remark}
Observe that the minimality assumption in Definition \ref{def:linStrand} ensures that the linear strand is well defined. Choosing bases, the linear strand can be obtained by restricting to the columns where only linear entries occur in the matrix representation of each differential.
\end{remark}

\begin{theorem}[\cite{herzog2015linear}, Theorem 1.1]\label{thm:linstrandequiv}
Let $G_\bullet$ be a linear complex of free $A$-modules with initial degree $n$. Then the following are equivalent:
\begin{enumerate}
    \item The complex $G_\bullet$ is the linear strand of a finitely generated $A$-module with initial degree $n$.
    \item The homology $H_i (G_\bullet)_{i+n+j} = 0$ for all $i>0$ and $j=0, \ 1$.
\end{enumerate}
\end{theorem}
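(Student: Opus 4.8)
The plan is to prove the two implications separately: $(1)\Rightarrow(2)$ is a direct consequence of minimality together with exactness of a resolution, while $(2)\Rightarrow(1)$ is the substantial direction and I would attack it through a hyperhomology spectral sequence computing the linear Betti numbers of a candidate module. Two standing facts are worth recording first. If $F_\bullet$ is the minimal graded free resolution of a module of initial degree $n$, then minimality forces generator degrees to increase strictly along the differential, so $\beta_{i,j}=0$ for $j<i+n$; thus $F_i = L_i \oplus N_i$ with $L_i = A(-i-n)^{\beta_{i,i+n}}$ the linear strand part and $N_i$ generated in degrees $\ge i+n+1$. Comparing generator degrees, the block of $d_i^F$ from $L_i$ into $N_{i-1}$ has entries of nonpositive degree, hence vanishes by minimality; so $d^F$ is block upper triangular and $F_\bullet^{\lin}=L_\bullet$ is genuinely a subcomplex. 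Dually, a linear complex $G_\bullet$ of initial degree $n$ automatically satisfies $G_i = A(-i-n)^{b_i}$, is itself minimal, and equals its own linear strand.

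For $(1)\Rightarrow(2)$, write $G_\bullet = F_\bullet^{\lin}=L_\bullet$. Since $(L_{i+1})_{i+n}=0$, the $j=0$ homology is $H_i(L)_{i+n} = \ker(d_i^L)_{i+n}\subseteq \ker(d_i^F)_{i+n} = \im(d_{i+1}^F)_{i+n}\subseteq(\m F_i)_{i+n}=0$. For $j=1$, a cycle $z\in\ker(d_i^L)_{i+n+1}$ is also killed by $d_i^F$ because the off-diagonal block vanishes, so $z = d_{i+1}^F(w)$ with $w\in (F_{i+1})_{i+n+1}$; but $(N_{i+1})_{i+n+1}=0$ forces $w\in(L_{i+1})_{i+n+1}$, whence $z\in\im(d_{i+1}^L)_{i+n+1}$ and $H_i(L)_{i+n+1}=0$.

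For $(2)\Rightarrow(1)$, set $M:=\coker(d_1^G)=H_0(G_\bullet)$; since $d_1^G$ has image in $\m G_0$, the module $M$ has initial degree $n$ with $\beta_{0,n}(M)=b_0$. The heart is to show $\beta_{i,i+n}(M)=b_i$ for all $i$, for which I would use the hyperhomology spectral sequence of the complex of frees $G_\bullet$,
\[ E^2_{p,q} = \Tor^A_p\big(H_q(G_\bullet),k\big) \;\Longrightarrow\; H_{p+q}\big(G_\bullet\otimes_A k\big). \]
As $G_\bullet$ is minimal, $G_\bullet\otimes_A k$ has zero differential, so the abutment in homological degree $m$ is $k^{b_m}$, concentrated in internal degree $m+n$. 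Tracking internal degree $m+n$ along the antidiagonal $p+q=m$: the hypothesis $H_q(G)_{q+n}=0$ for $q\ge 1$ makes every term with $q\ge 1$ vanish there already on the $E^2$ page, leaving only $E^2_{m,0}=\Tor_m(M,k)$, whose degree-$(m+n)$ part is exactly $\beta_{m,m+n}(M)$. The two hypotheses together say $H_q(G)$ is concentrated in degrees $\ge q+n+2$, and this is precisely what forces the targets $\Tor_{m-r}(H_{r-1}(G),k)$ of all higher differentials $d^r$ leaving $E^2_{m,0}$ to vanish in internal degree $m+n$; hence $E^\infty_{m,0}=E^2_{m,0}$ there, giving $\beta_{m,m+n}(M)=b_m$. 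To upgrade this numerical equality to an isomorphism, lift $\mathrm{id}_M$ along $F_\bullet\to M$ to a comparison map $\phi\colon G_\bullet\to F_\bullet$, which for degree reasons restricts to $\phi^{\lin}\colon G_\bullet\to F_\bullet^{\lin}$; after applying $-\otimes_A k$ its degree-$(m+n)$ component is the edge homomorphism of the spectral sequence, an isomorphism $k^{b_m}\to\Tor_m(M,k)_{m+n}=k^{b_m}$, so graded Nakayama gives that $\phi^{\lin}$ is an isomorphism of complexes and $G_\bullet\cong F_\bullet^{\lin}$.

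I expect the main obstacle to be the bookkeeping in the spectral sequence, and specifically seeing that the two homology conditions play genuinely distinct roles: $H_i(G)_{i+n}=0$ eliminates the off-diagonal $E^2$-contributions, whereas $H_i(G)_{i+n+1}=0$ is exactly what annihilates the \emph{targets} of the higher differentials leaving the $q=0$ column, so that the linear Betti number of $M$ survives to $E^\infty$. A secondary subtlety is the final step: one must check that the comparison map actually realizes the edge homomorphism, so as to obtain an honest isomorphism $G_\bullet\cong F_\bullet^{\lin}$ rather than a mere coincidence of ranks.
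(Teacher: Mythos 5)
The paper does not actually prove this statement: it is imported verbatim from \cite{herzog2015linear} (Theorem 1.1), with only the remark that the cited proof extends from bounded to unbounded complexes. So there is no internal proof to compare against, and your argument has to stand on its own --- which it does. The direction $(1)\Rightarrow(2)$ is handled exactly as one would expect: the degree bookkeeping showing that the block $L_i\to N_{i-1}$ of $d^F$ vanishes (entries of degree $\le 0$ killed by minimality) is what makes $F_\bullet^{\lin}$ a genuine subcomplex, and then $H_i(L)_{i+n}=0$ from $\im d^F\subseteq\m F$ and $H_i(L)_{i+n+1}=0$ from $(N_{i+1})_{i+n+1}=0$ are both correct. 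For $(2)\Rightarrow(1)$, the hyperhomology spectral sequence $E^2_{p,q}=\Tor_p^A(H_q(G_\bullet),k)\Rightarrow H_{p+q}(G_\bullet\otimes_A k)$ does the job: your observation that the two hypotheses together say $H_q(G)$ lives in degrees $\ge q+n+2$ for $q\ge 1$ is exactly what kills both the off-row $E^2$-terms and the targets $\Tor_{m-r}(H_{r-1}(G),k)_{m+n}$ of the outgoing differentials from $E^r_{m,0}$, so $\beta_{m,m+n}(M)=b_m$. The one point you rightly flag as delicate --- that the lifted comparison map $\phi$ realizes the edge homomorphism --- is fine: $\phi$ induces a map of double complexes $G\otimes P\to F\otimes P$, hence of both spectral sequences; on the second it is $\Tor_m(\mathrm{id}_M,k)$, on the abutments (identified via the degenerate first spectral sequences and minimality) it is $\phi_m\otimes k$, and since the lift is unique up to homotopy and homotopies land in $\m F_m$, the map $\phi_m\otimes k$ is canonical. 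Graded Nakayama then upgrades the rank equality to $G_\bullet\cong F_\bullet^{\lin}$. The only housekeeping worth adding is that $M=\coker(d_1^G)$ is finitely generated of initial degree $n$ because $G_0$ is finite free and $\im d_1^G\subseteq\m G_0$. This is likely a somewhat more machinery-heavy route than the inductive, degree-by-degree comparison in the cited source, but it is complete and arguably more transparent about where each of the two vanishing hypotheses is used.
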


The following Proposition shows that linear strands behave in a manner that is quite similar to minimal free resolutions.

\begin{prop}[\cite{herzog2015linear}, Corollary 1.2]\label{prop:linstrandcor}
Let $G_\bullet$ be a linear complex of free $A$-modules with initial degree $n$ such that $H_i (G_\bullet)_{i+n+j} = 0$ for all $i>0$, $j=0, \ 1$. 

Let $N$ be a finitely generated $A$-module with minimal graded free resolution $F_\bullet$. Assume that there exist isomorphisms making the following diagram commute:
$$\xymatrix{G_1 \ar[d]_-{\sim} \ar[r] & G_0 \ar[d]^-{\sim} \\
F_1^{\textrm{lin}} \ar[r] & F_0^{\textrm{lin}}. \\}$$
Then $G_\bullet \cong F_\bullet^\textrm{lin}$.
\end{prop}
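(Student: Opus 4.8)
The plan is to upgrade the given isomorphism of the two bottom terms into an isomorphism of the entire complexes by building a comparison chain map and then checking, degree by degree, that it is bijective using the homology hypotheses for \emph{both} complexes at once.

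First I would record that $F_\bullet^{\lin}$ satisfies the same vanishing as $G_\bullet$. By construction $F_\bullet^{\lin}$ is the linear strand of the finitely generated module $N$ (whose initial degree must be $n$ for the given square to make sense degreewise), so it realizes condition (1) of Theorem \ref{thm:linstrandequiv}; the implication (1)$\Rightarrow$(2) yields $H_i(F_\bullet^{\lin})_{i+n+j}=0$ for all $i>0$ and $j=0,1$. Thus both $G_\bullet$ and $F_\bullet^{\lin}$ are minimal linear complexes of initial degree $n$ satisfying the vanishing in the internal degrees $i+n$ and $i+n+1$; minimality is automatic, since every entry of every differential is a linear form and hence lies in $\m$.

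Next I would construct a chain map $\phi\colon G_\bullet\to F_\bullet^{\lin}$ extending the given $\phi_0,\phi_1$, by induction on homological degree; this lifting is the main obstacle. Suppose $\phi_0,\dots,\phi_i$ are built with $i\geq 1$. The composite $\phi_i\, d_{i+1}^G$ carries the degree-$(n+i+1)$ generators of $G_{i+1}$ into $(F_i^{\lin})_{n+i+1}$, and it is annihilated by $d_i^{F^{\lin}}$ because $d_i^G d_{i+1}^G=0$ and $\phi$ is a chain map in lower degrees; hence its image lies in $\ker(d_i^{F^{\lin}})_{n+i+1}$. The vanishing $H_i(F_\bullet^{\lin})_{i+n+1}=0$ (the case $j=1$ for $F^{\lin}$) identifies this kernel with $\im(d_{i+1}^{F^{\lin}})_{n+i+1}$, so each generator of $G_{i+1}$ lifts through $d_{i+1}^{F^{\lin}}$ in internal degree $n+i+1$; choosing such lifts defines the degree-preserving $\phi_{i+1}$ with $d_{i+1}^{F^{\lin}}\phi_{i+1}=\phi_i\, d_{i+1}^G$. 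This is the only step where the hypothesis on $F^{\lin}$ in the $j=1$ range is invoked.

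Finally I would verify that every $\phi_i$ is an isomorphism by a parallel induction; since all complexes are graded and minimal, by Nakayama it suffices that $\phi_{i+1}$ be bijective on the degree-$(n+i+1)$ generating spaces. For injectivity, $\phi_{i+1}(x)=0$ forces $\phi_i\, d_{i+1}^G(x)=0$, hence $d_{i+1}^G(x)=0$ as $\phi_i$ is an isomorphism, and then $x=0$ since $\ker(d_{i+1}^G)_{n+i+1}=H_{i+1}(G_\bullet)_{(i+1)+n}=0$ (the case $j=0$ for $G$, using that $\im d_{i+2}^G$ vanishes in this internal degree). For surjectivity, given a generator $y$ of $F_{i+1}^{\lin}$ the element $\phi_i^{-1}(d_{i+1}^{F^{\lin}}y)\in(G_i)_{n+i+1}$ is a cycle (it maps to $d_i^{F^{\lin}}d_{i+1}^{F^{\lin}}y=0$ under the isomorphism $\phi_{i-1}$ after applying $d_i^G$), so it lies in $\im(d_{i+1}^G)_{n+i+1}$ by $H_i(G_\bullet)_{i+n+1}=0$ (the case $j=1$ for $G$); writing it as $d_{i+1}^G(x)$ gives $d_{i+1}^{F^{\lin}}(\phi_{i+1}(x)-y)=0$, and since $\ker(d_{i+1}^{F^{\lin}})_{n+i+1}=H_{i+1}(F_\bullet^{\lin})_{(i+1)+n}=0$ (the case $j=0$ for $F^{\lin}$) we conclude $\phi_{i+1}(x)=y$. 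This uses exactly the four instances $j\in\{0,1\}$ of the hypotheses across the two complexes and produces the desired isomorphism $G_\bullet\cong F_\bullet^{\lin}$.
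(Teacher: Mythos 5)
Your argument is correct. Note that the paper itself gives no proof of this proposition --- it is quoted from \cite{herzog2015linear}, Corollary 1.2, with only the remark that the finite-length hypothesis on $G_\bullet$ may be dropped --- so there is no in-paper argument to compare against. Your proof (first transfer the vanishing to $F_\bullet^{\lin}$ via Theorem \ref{thm:linstrandequiv}, then lift the given square to a chain map $\phi\colon G_\bullet\to F_\bullet^{\lin}$ using $H_i(F_\bullet^{\lin})_{i+n+1}=0$, and finally check bijectivity on the degree-$(n+i+1)$ generating spaces, where injectivity and surjectivity each use one $j=0$ and one $j=1$ instance together with the fact that boundaries, respectively higher cycles, vanish in the generating degree because the complexes are linear) is precisely the standard argument behind the cited result, and every step is correctly justified.
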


\begin{remark}
In the original formulation of Theorem \ref{thm:linstrandequiv} and Proposition \ref{prop:linstrandcor}, it was assumed that the complex $G_\bullet$ had finite length. However, the proofs extend with no effort to the infinite case.
\end{remark}

Next, we introduce our main objects of study for the rest of the paper.

\begin{definition}
A $k$-algebra $A$ is \emph{Koszul} if the residue field $k$ has a linear graded minimal free resolution over $A$.
\end{definition}

\begin{example}
Let $A = k[x_1 , \dots , x_n]$. Then the ideal of variables forms a regular sequence, so the residue field is resolved by the Koszul complex on $x_1 , \dots , x_n$, which is a linear complex.  
\end{example}

The following will be needed for stating Definition \ref{def:PriddyComplex}.

\begin{definition}
Let $A$ be a graded $k$-algebra. The algebra $A$ is \emph{quadratic} if $A = T(V) / Q$, where $V$ is a $k$-vector space, $T(V)$ is the tensor algebra, and $Q$ is a quadratic ideal of $T(V)$. 

Given a quadratic $k$-algebra $A$, the \emph{quadratic dual} is the algebra
$$A^! := \frac{T(V^*)}{Q^\perp},$$
with $V^* := \hom_k (V,k)$, and $Q^\perp$ is the quadratic ideal generated by the orthogonal complement to $Q_2$ with respect to the natural pairing
\begingroup\allowdisplaybreaks
\begin{align*}
    (V \otimes V) \otimes (V^* \otimes V^*) &\to k, \\
    (v_1 \otimes v_2) \otimes (u_1 \otimes u_2) &\mapsto u_1(v_1) \cdot u_2 (v_2). \\
\end{align*}
\endgroup
\end{definition}

\begin{example}
If $A = S(V)$ is the symmetric algebra on some vector space $V$, then $A^! = \bigwedge V^*$ is the exterior algebra on $V^*$.
\end{example}

\begin{notation}
If $x_1 , \dots , x_n$ are the generators of the $k$-algebra $A$, then the notation $x_1^*, \dots , x_n^*$ will denote the dual generators of $A^!$. 
\end{notation}

\begin{definition}\label{def:PriddyComplex}
Let $A$ be a quadratic algebra. The \emph{Priddy complex} is the complex $P_\bullet^A$ with
$$P_i^A := A \otimes_k (A^!)_i^*$$
and differential $P_i^A \to P_{i-1}^A$ induced by multiplication by the trace element, $\sum_i x_i \otimes x_i^*$.
\end{definition}

The following theorem was proved by Priddy in \cite{priddy1970koszul}; other proofs may be found in, for instance, \cite{polishchuk2005quadratic}.

\begin{theorem}
Let $A$ be a quadratic $k$-algebra. Then $A$ is Koszul if and only if the Priddy complex $P_\bullet^A$ is acyclic.
\end{theorem}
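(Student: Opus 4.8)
The plan is to show that the Priddy complex $P_\bullet^A$ is, regardless of whether $A$ is Koszul, precisely the linear strand of the minimal graded free resolution $F_\bullet$ of $k$, and then to read off the equivalence from this identification. The engine is Proposition \ref{prop:linstrandcor}: it suffices to check that $P_\bullet^A$ is a linear complex of free modules satisfying the homology vanishing of Theorem \ref{thm:linstrandequiv} and that it agrees with $F_\bullet^{\lin}$ in homological degrees $0$ and $1$.

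First I would record the basic structural facts. Writing $P_i^A = A \otimes_k (A^!)_i^*$, the generators of $P_i^A$ sit in internal degree $i$ because $(A^!)_i^*$ is concentrated there, and the differential, being multiplication by the trace element $\sum_j x_j \otimes x_j^* \in A_1 \otimes_k (A^!)_1$, raises the $A$-degree by one while lowering the $A^!$-degree by one; hence $P_\bullet^A$ is a linear complex with initial degree $0$. That $d^2 = 0$ is where the duality is used: the composite is governed by $\sum_{j,\ell} x_\ell x_j \otimes (x_\ell^* x_j^*)$, and this vanishes as an operator precisely because the relation space $Q^\perp_2 \subseteq V^* \otimes V^*$ defining $A^!$ is the orthogonal complement of $Q_2 \subseteq V \otimes V$ under the pairing from the definition of the quadratic dual. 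Finally, $(A^!)_0 = k$ and $(A^!)_1 = V^*$ give $P_0^A = A$ and $P_1^A = A \otimes_k V$, and $d_1$ is the multiplication map $a \otimes v \mapsto av$, whose cokernel is $A/A_{\geq 1} = k$; comparing with the start $A \otimes_k V \to A$ of the minimal resolution of $k$ identifies the truncation $(P_1^A \to P_0^A)$ with $(F_1^{\lin} \to F_0^{\lin})$.

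The crux is the homology vanishing $H_i(P_\bullet^A)_i = H_i(P_\bullet^A)_{i+1} = 0$ for $i > 0$ required by Theorem \ref{thm:linstrandequiv} (with $n=0$). In internal degree $i$ the complex terminates on the left at $(P_i^A)_i = (A^!)_i^*$, so $H_i(P_\bullet^A)_i$ is the kernel of the map $(A^!)_i^* \to V \otimes_k (A^!)_{i-1}^*$; this map is the $k$-dual of the multiplication $V^* \otimes_k (A^!)_{i-1} \to (A^!)_i$, which is surjective because $A^!$ is generated in degree $1$, so the dual is injective and $H_i(P_\bullet^A)_i = 0$. The subtler vanishing $H_i(P_\bullet^A)_{i+1} = 0$ is the real obstacle: dualizing the internal-degree-$(i+1)$ strand turns it into a multiplication complex for $A^!$, and exactness at the relevant spot is equivalent to the statement that the relations of $A^!$ are generated in degree $2$---which holds because $A^! = T(V^*)/Q^\perp$ is quadratic by construction. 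With both vanishings in hand, Theorem \ref{thm:linstrandequiv} shows $P_\bullet^A$ is the linear strand of some finitely generated module, and the degree $0$/$1$ comparison together with Proposition \ref{prop:linstrandcor} pins that module down as $k$, yielding $P_\bullet^A \cong F_\bullet^{\lin}$.

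Given this identification the theorem is immediate. If $A$ is Koszul then $F_\bullet$ is linear, so $F_\bullet = F_\bullet^{\lin} = P_\bullet^A$ and, since $F_\bullet$ resolves $k$, the complex $P_\bullet^A$ is acyclic. Conversely, if $P_\bullet^A$ is acyclic then, being a linear complex of free modules with $H_0 = \coker d_1 = k$, it is a linear free resolution of $k$, so $k$ has a linear resolution and $A$ is Koszul. The one genuinely technical point, and the step I expect to absorb most of the work, is the degree-two exactness giving $H_i(P_\bullet^A)_{i+1} = 0$; everything else is bookkeeping with internal degrees and the orthogonality built into $A^!$. I would also invoke the remark following Proposition \ref{prop:linstrandcor} to license the use of these linear-strand criteria for the (possibly infinite) resolution of $k$.
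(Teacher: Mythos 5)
Your argument is correct, but it is worth noting that the paper does not actually prove this theorem: it is quoted from Priddy with a pointer to Polishchuk--Positselski, so there is no internal proof to match. What you have written is a self-contained alternative that stays entirely inside the paper's own toolkit, and in fact it closely mirrors the strategy the paper uses later for the subcomplex $A \otimes B^*_\bullet$ (Propositions \ref{prop:subPriddyCx} and \ref{prop:varslinRes}): verify the two homology vanishings of Theorem \ref{thm:linstrandequiv}, match the degree $0$ and $1$ terms, and invoke Proposition \ref{prop:linstrandcor} to identify the complex with a linear strand --- here the linear strand of the resolution of $k$ itself. The standard literature proofs instead go through the bar complex or the identification of $(A^!)_i$ with $\operatorname{Ext}^i_A(k,k)$; your route trades that machinery for the linear-strand criteria (which, as the paper remarks, extend to infinite complexes). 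The one step that genuinely carries weight is $H_i(P_\bullet^A)_{i+1}=0$, which you correctly flag: after dualizing the internal-degree-$(i+1)$ strand one gets $Q^\perp \otimes (A^!)_{i-1} \to V^* \otimes (A^!)_{i}\to (A^!)_{i+1}$, and exactness in the middle is precisely the statement that for a quadratic algebra $T(W)/(R)$ the kernel of $W \otimes B_i \to B_{i+1}$ is the image of $R \otimes B_{i-1}$; this holds by a direct computation with the tensor-algebra presentation, so your sketch does close. Two small points to make explicit in a final write-up: $d^2=0$ needs the observation that the square of the trace element lands in the image of $Q_2 \otimes Q_2^\perp$ under the evaluation pairing (hence vanishes), and minimality of $P_\bullet^A$ (entries of the differential lie in $A_1$) is what lets you conclude in the converse direction that acyclicity gives the \emph{minimal} linear resolution of $k$.
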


\section{Iterated Mapping Cones for Strongly Koszul Algebras}\label{sec:itMapCones}

In this section, we turn our attention to the iterated mapping cone construction for Koszul algebras. As it turns out, this construction may not yield a minimal free resolution for arbitrary Koszul algebras (see Example \ref{ex:badKoszul}), but everything works out nicely if we restrict to the slightly smaller class known as \emph{strongly Koszul algebras}. These were introduced originally by Herzog, Hibi, and Restuccia, and have the convenient property that any ideal generated by variables has linear resolution. We also show that for an arbitrary Koszul algebra, the linear strand of any ideal generated in degree $1$ is obtained as a subcomplex of the Priddy complex. In the case of strongly Koszul algebras, this subcomplex will be the minimal free resolution. 

With these results established, we obtain a more general version of Lemma $1.5$ of \cite{herzog2002resolutions}. As an immediate consequence, ideals with linear quotients in strongly Koszul algebras satisfy a nearly identical set of properties to those in the polynomial ring case. For the remainder of the paper, $A := k [ x_1 , \dots , x_n ] / I$ will denote a commutative Koszul algebra.

\begin{definition}\label{def:strongKoszul}
A Koszul algebra $A$ is \emph{strongly Koszul} if there exists a basis $X$ for $A_1$ such that for every $Y \subset X$ and every $x \in X \backslash Y$, there exists a subset $Z \subset X$ such that $\big( (Y) : x \big) = (Z)$.

A Koszul algebra $A$ is \emph{universally Koszul} if every ideal generated by linear forms has a linear resolution.
\end{definition}

\begin{remark}
The above definition of strongly Koszul is a definition introduced by Conca, De Negri, and Rossi in \cite[Definition 3.11]{conca2013koszul}, and is slightly stronger than the original definition used by Herzog, Hibi, and Restuccia \cite{herzog2000strongly}. Throughout the rest of the paper, we will assume by convention that if $A$ is a strongly Koszul algebra, then the presentation $A = k[x_1 , \dots , x_n]/I$ has been chosen in such a way that $X = \{ x_1 , \dots , x_n \}$.
\end{remark}

The following Lemma is an essential component of what makes the iterated mapping cone construction work well for strongly Koszul algebras.
\begin{lemma}[\cite{herzog2000strongly}]\label{lem:linearVars}
Let $A$ be a strongly Koszul algebra. Then every ideal generated by a subset of the variables has a linear minimal free resolution.
\end{lemma}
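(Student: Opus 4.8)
The plan is to prove the equivalent statement that $\Tor^A_i(A/(W),k)_j = 0$ whenever $j \neq i$, for every subset $W \subseteq \{x_1,\dots,x_n\}$; translating back along the shift $\Tor^A_i((W),k) = \Tor^A_{i+1}(A/(W),k)$, this says exactly that the ideal $(W)$, which is generated in degree $1$, has a linear minimal free resolution. The engine of the argument is the short exact sequence obtained by splitting off a single variable. Writing $(W) = (Y) + (x)$ with $Y = W \setminus \{x\}$, the strongly Koszul hypothesis furnishes a subset $Z \subseteq X$ with $\big((Y):x\big) = (Z)$, and since $\big((Y):(Y)+(x)\big) = \big((Y):x\big)$ the colon computation yields the degree-shifted short exact sequence
$$0 \to (A/(Z))(-1) \xrightarrow{\ \cdot x\ } A/(Y) \to A/(W) \to 0.$$
First I would record this sequence and pass to its long exact sequence in $\Tor^A_\bullet(-,k)$, keeping track of internal degrees so that the $(A/(Z))(-1)$ term contributes $\Tor^A_i(A/(Z),k)_{j-1}$.

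The main obstacle, and the reason a naive induction fails, is that the colon ideal $(Z)$ can require strictly more generators than $(W)$ itself; indeed one always has $Y \subseteq Z$, so induction on the number of generators alone does not close. I would resolve this with a double induction: an outer induction on the homological degree $i$, and, within each fixed $i$, an inner induction on $|W|$. In the relevant segment of the long exact sequence,
$$\Tor^A_i(A/(Y),k)_j \to \Tor^A_i(A/(W),k)_j \to \Tor^A_{i-1}(A/(Z),k)_{j-1},$$
the right-hand term lives one homological degree lower, so for $j > i$ it vanishes by the outer inductive hypothesis applied to $(Z)$; this is precisely where the otherwise-problematic large colon ideal is handled harmlessly. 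The left-hand term sits at the same homological degree $i$ but involves $(Y)$, which has one fewer generator than $(W)$, so it vanishes for $j > i$ by the inner inductive hypothesis. Hence the middle term vanishes for $j > i$, completing the inductive step.

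For the base cases, the outer induction starts at $i = 0$, where $\Tor^A_0(A/(W),k) = k$ is concentrated in degree $0$ for every $W$; the inner induction starts at $W = \emptyset$, where $A/(W) = A$ is free and all higher $\Tor$ vanish. (I would also note that the case $|W|=1$ needs the instance $Y=\emptyset$ of the hypothesis, so that $\ann_A(x) = (0:x)$ is again generated by variables.) This establishes $\Tor^A_i(A/(W),k)_j = 0$ for all $j > i$.

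To finish, I would invoke the standard consequence of minimality: since $A/(W)$ is generated in degree $0$, the generator degrees of its minimal graded free resolution increase strictly with homological degree, which forces $\Tor^A_i(A/(W),k)_j = 0$ for $j < i$ as well. Combining the two vanishing ranges shows $\Tor^A_i(A/(W),k)$ is concentrated in degree $i$, which is precisely the assertion that $(W)$ has a linear minimal free resolution. A pleasant byproduct worth remarking on is that the case $W = X$ specializes to $A/\fm = k$, so the argument in fact reproves that $A$ is Koszul rather than secretly presupposing it.
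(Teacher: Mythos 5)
Your argument is correct, and it is essentially the argument of the cited source: the paper states Lemma \ref{lem:linearVars} without proof, attributing it to \cite{herzog2000strongly}, where the same short exact sequence $0 \to (A/(Z))(-1) \xrightarrow{\cdot x} A/(Y) \to A/(W) \to 0$ and the same induction scheme (outer on homological degree, inner on the number of generators) are used to prove the vanishing of $\Tor^A_i(A/(W),k)_j$ for $j>i$. The point you isolate --- that the possibly larger colon ideal $(Z)$ is harmless because it enters the long exact sequence one homological degree lower, where the outer inductive hypothesis already applies to \emph{every} subset of the variables --- is precisely what makes the induction close, and your observation that the case $W=X$ re-derives Koszulness of $A$ is likewise how \cite{herzog2000strongly} deduce that strongly Koszul algebras are Koszul.
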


It is important to note that Lemma \ref{lem:linearVars} does not hold for general Koszul algebras. The following example was coomunicated to the author by Hailong Dao, and is a well-known example due to Conca of a Koszul algebra that is \emph{not} LG-quadratic (see \cite[Example 1.20]{conca2014koszul}).

\begin{example}\label{ex:badKoszul}
Let
$$A := \frac{k[a,b,c,d]}{(ac,ad,ab-bd,a^2+bc,b^2)}.$$
Then $A$ is a Koszul algebra, but the ideal $(b)$ does not have linear resolution since the relation $c^2 \cdot b = 0$ is minimal.
\end{example}

The following is an exercise in linear algebra which gives a straightforward relation between generators of the Koszul algebra and its quadratic dual.

\begin{obs}\label{obs:A!relations}
Let $A$ be a commutative Koszul algebra and let $\{ x_s x_t \mid (s,t) \in S \}$ be a basis for $A_2$. For all $(u,v) \notin S$, there exist coefficients $f^{u,v}_{s,t} \in k$ such that
$$x_u x_v = \sum_{(s,t) \in S} f^{u,v}_{s,t} x_s x_t.$$
Given this notation, $A^!_2$ has basis $\{ x^*_u x^*_v \mid (u,v) \notin S \}$ with relations
$$x^*_s x^*_t = - \sum_{(u,v) \notin S} f^{u,v}_{s,t} x^*_u x^*_v.$$
\end{obs}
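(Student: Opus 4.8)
The plan is to exploit the defining bilinear pairing between $V \otimes V$ and $V^* \otimes V^*$ directly, and to read off the dual relations as the orthogonality conditions that cut out $Q^\perp$. First I would set up the pairing $\langle -, -\rangle$ on $(V \otimes V) \otimes (V^* \otimes V^*)$ given by $\langle x_i \otimes x_j, x_s^* \otimes x_t^* \rangle = \delta_{is}\delta_{jt}$, so that $\{x_s^* \otimes x_t^*\}$ is the dual basis to $\{x_s \otimes x_t\}$ on the nose. Since $A$ is commutative and quadratic, the quadratic relations $Q_2 \subset V \otimes V$ include all the commutators $x_i \otimes x_j - x_j \otimes x_i$ together with the straightening relations $x_u \otimes x_v - \sum_{(s,t)\in S} f^{u,v}_{s,t}\, x_s \otimes x_t$ expressing each non-basis product in terms of the chosen basis $\{x_s x_t \mid (s,t)\in S\}$ of $A_2$. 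The key observation is that, after passing to the commutative quotient, a clean set of generators for $Q_2$ is given precisely by the vectors $r_{u,v} := x_u \otimes x_v - \sum_{(s,t)\in S} f^{u,v}_{s,t}\, x_s \otimes x_t$ for $(u,v) \notin S$, since these express exactly the relations defining $A_2$ as a quotient of $V\otimes V$.

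Next I would compute $Q^\perp$, the orthogonal complement of $Q_2$. By definition an element $\omega = \sum_{(s,t)} c_{s,t}\, x_s^* \otimes x_t^*$ lies in $Q^\perp$ iff $\langle r_{u,v}, \omega\rangle = 0$ for every $(u,v)\notin S$, which by the duality of bases unwinds to the single linear condition
$$ c_{u,v} - \sum_{(s,t)\in S} f^{u,v}_{s,t}\, c_{s,t} = 0 \quad \text{for each } (u,v)\notin S. $$
This is a system in which the coefficients $\{c_{s,t} \mid (s,t)\in S\}$ are free and each $c_{u,v}$ with $(u,v)\notin S$ is determined. A dimension count then shows $Q^\perp$ has dimension equal to $|S|$, so the classes $x_s^* x_t^*$ with $(s,t)\in S$ vanish in $A^!_2$ while the $x_u^* x_v^*$ with $(u,v)\notin S$ span. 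I would verify that the latter are in fact a basis: the natural pairing descends to a perfect pairing between $A^!_2 = (V^*\otimes V^*)/Q^\perp$ and $Q_2$ (equivalently $A^!_2 \cong (Q_2)^*$ under the quadratic duality), and $\dim A^!_2 = \dim Q_2 = \binom{n+1}{2} - |S|$ equals the number of pairs $(u,v)\notin S$ (taken up to the commutativity/symmetry convention), giving the asserted basis $\{x_u^* x_v^* \mid (u,v)\notin S\}$.

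Finally I would extract the stated relation. The relation $x_s^* x_t^* = -\sum_{(u,v)\notin S} f^{u,v}_{s,t}\, x_u^* x_v^*$ is obtained by writing $x_s^* \otimes x_t^*$ modulo $Q^\perp$ and expanding in the chosen basis: each generator $r_{u,v}$ of $Q_2$ supplies, via orthogonality, exactly the coefficient $f^{u,v}_{s,t}$ relating $x_s^* x_t^*$ to $x_u^* x_v^*$, and the sign comes from moving the non-basis term across the relation $r_{u,v}=0$. Concretely, dualizing the straightening relation $x_u x_v = \sum_{(s,t)\in S} f^{u,v}_{s,t}\, x_s x_t$ transposes the coefficient matrix and introduces the minus sign, which is the standard phenomenon that the relations of $A^!$ are the transpose (up to sign) of the relations of $A$. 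The step I expect to be the main obstacle is bookkeeping the symmetry conventions carefully: because $A$ is commutative, one must fix a consistent convention for which ordered pairs $(s,t)$ index the basis $S$ and how the symmetric square interacts with the tensor pairing, so that the off-diagonal commutator relations do not interfere with the straightening relations and the final coefficient $f^{u,v}_{s,t}$ matches on both sides. Once the indexing convention is pinned down, the computation is a direct transpose-and-orthogonalize argument in finite-dimensional linear algebra.
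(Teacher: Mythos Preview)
Your approach is correct and is precisely the linear-algebra computation the paper has in mind; the paper itself offers no proof beyond calling the observation ``an exercise in linear algebra,'' so there is nothing further to compare.

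One small correction to your dimension count: $\dim A^!_2 = \dim Q_2 = n^2 - |S|$, not $\binom{n+1}{2} - |S|$. Here $S$ indexes a basis of $A_2$ by \emph{ordered} pairs (with, say, at most one of $(s,t)$ and $(t,s)$ present when $s\neq t$, since $x_sx_t=x_tx_s$ in $A$), so $|S|=\dim A_2$, and the complement of $S$ in $[n]\times[n]$ has size $n^2-|S|$, matching $\dim A^!_2$. Your own basis $\{r_{u,v}\mid (u,v)\notin S\}$ of $Q_2$ already gives this count correctly; the $\binom{n+1}{2}$ appears to be a slip from thinking of $A_2$ as a quotient of $S^2V$ rather than of $V\otimes V$. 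Once that is fixed, the orthogonality computation you describe goes through verbatim: the elements $w_{s,t}:=x_s^*\otimes x_t^*+\sum_{(u,v)\notin S}f^{u,v}_{s,t}\,x_u^*\otimes x_v^*$ form a basis of $Q^\perp$, and setting $w_{s,t}=0$ in $A^!_2$ yields exactly the stated relation.
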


In the following construction, we introduce the subcomplex of the Priddy complex which will serve as the linear strand/minimal free resolution of the ideals of interest.

\begin{construction}\label{cons:subPriddy}
Let $A$ be a Koszul algebra and $(\ell_1 , \dots , \ell_k)$ an ideal minimally generated by linear forms, and let $L := \sum_{i \notin E} A^! x_i^*$ be the left ideal generated by the variables with indices not appearing in $E$. Define
$$B := A^!/L.$$
Observe that $B^*$ inherits the structure of a right $A^!$-module via the action $(\phi \cdot a) (b) := \phi (a b)$, for $\phi \in B^*$, $a \in A^!$, and $b \in B$. Consider the induced complex
$$\cdots \to A \otimes B^*_d \to A \otimes B^*_{d-1} \to \cdots \to A \otimes B^*_1 \to A \to 0,$$
where each differential is right multiplication by the trace element $\sum_{i =1}^n x_i \otimes x_{i}^*$. 
\end{construction}

\begin{prop}\label{prop:subPriddyCx}
Let $A$ be a Koszul algebra and $A \otimes B^*_\bullet$ denote the complex of Construction \ref{cons:subPriddy}. Then
$$(H_i (A \otimes B^*_\bullet))_{i+j} = 0 \quad \textrm{for} \ i>0 \ \textrm{and} \ j=0, 1.$$
\end{prop}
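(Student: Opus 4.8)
The plan is to realize $A \otimes B^*_\bullet$ as a subcomplex of the Priddy complex $P^A_\bullet$ and to extract the desired vanishing from the associated long exact sequence in homology; since the statement is asserted for an arbitrary Koszul algebra (with no strong hypothesis), the comparison with $P^A_\bullet$ seems more natural than a variable-by-variable induction. First I would verify that $B^*$, viewed inside $(A^!)^*$ as the annihilator $L^\perp$ of the left ideal $L$, is a right $A^!$-submodule: because $L$ is a \emph{left} ideal, $(\phi \cdot a)(\ell) = \phi(a\ell) = 0$ for $\phi \in L^\perp$, $a \in A^!$, $\ell \in L$, so the right trace action preserves $L^\perp$. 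Consequently $A \otimes B^*_\bullet \hookrightarrow P^A_\bullet$ is a subcomplex whose quotient is the complex $A \otimes L^*_\bullet$, where $L^*_q \cong (L_q)^*$ sits in internal degree $q$ and the differential is again right multiplication by the trace element $\sum_i x_i \otimes x_i^*$.

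Next, since $A$ is Koszul the Priddy complex is acyclic, so $H_i(P^A_\bullet) = 0$ for all $i > 0$. Feeding the short exact sequence $0 \to A \otimes B^*_\bullet \to P^A_\bullet \to A \otimes L^*_\bullet \to 0$ into the long exact sequence and using acyclicity, the connecting homomorphism yields internal-degree-preserving isomorphisms $H_i(A \otimes B^*_\bullet) \cong H_{i+1}(A \otimes L^*_\bullet)$ for every $i \geq 1$; the short exact sequence consists of graded maps of degree zero, so the whole long exact sequence respects the internal grading. It therefore suffices to show $H_{i+1}(A \otimes L^*_\bullet)_{i+j} = 0$ for $i \geq 1$ and $j = 0, 1$.

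The case $j = 0$ is immediate from linearity: the term $A \otimes L^*_{i+1}$ is generated in internal degree $i+1$, so $H_{i+1}(A \otimes L^*_\bullet)$ is concentrated in internal degrees $\geq i+1$ and in particular vanishes in degree $i$. For $j = 1$, writing $p = i+1 \geq 2$, the internal-degree-$p$ strand of the quotient complex at homological spot $p$ reads $0 \to L^*_p \xrightarrow{\partial} A_1 \otimes L^*_{p-1}$ (the incoming term $A_{-1}\otimes L^*_{p+1}$ vanishes), so $H_p(A \otimes L^*_\bullet)_p = \ker \partial$. Here $\partial(\phi) = \sum_i x_i \otimes (\phi \cdot x_i^*)$, and since $\{x_i\}$ is a basis of $A_1$ this vanishes precisely when $\phi$ annihilates $(A^!)_1 \cdot L_{p-1}$. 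Because $L$ is a left ideal generated in degree $1$ and $A^!$ is standard graded, $(A^!)_1 \cdot L_{p-1} = L_p$; as $\phi \in (L_p)^*$, this forces $\phi = 0$, whence $H_p(A \otimes L^*_\bullet)_p = 0$ and the proof is complete.

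The main obstacle is the $j = 1$ computation together with the grading bookkeeping: one must track the internal-degree conventions carefully enough to see that the relevant homology of the quotient complex lands exactly in its lowest strand, where the identity $(A^!)_1 L_{p-1} = L_p$ — valid only for $p \geq 2$, which is precisely guaranteed by $i \geq 1$ — makes the boundary map injective. By contrast, the subcomplex structure and the passage through the long exact sequence are routine once the right-module compatibility of $L^\perp$ has been checked.
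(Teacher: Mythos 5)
Your proof is correct and follows essentially the same route as the paper: both embed $A \otimes B^*_\bullet$ into the acyclic Priddy complex and reduce everything to the identity $(A^!)_1 L_{p-1} = L_p$, which holds because $L$ is a left ideal generated in degree one. The only difference is organizational — you run the long exact sequence of the quotient complex $A \otimes L^*_\bullet$, while the paper lifts a degree-$(i+1)$ cycle directly through the Priddy differential and checks by hand that the resulting functional annihilates $L$; the two computations are dual to one another.
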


\begin{proof}
Making an appropriate change of variable, it is of no loss of generality to assume that each $\ell_j = x_{i_j}$ is a variable. Moreover, this is compatible with the Priddy differential since this differential is induced by multiplication by the image of $1$ under the isomorphism $\hom (V,V) \cong V \otimes V^*$, which is independent of the choice of basis for $V$.

Notice that $H_i(A \otimes B_\bullet^*)_{i} = 0$ trivially, since there is a natural inclusion $A \otimes B_\bullet^* \hookrightarrow A \otimes (A^!)^*_\bullet$ (obtained by dualizing the natural surjection $A^! \to B$), and there are no degree $i$ cycles in homological degree $i$ in the Priddy complex. 

Next, let $ y = \sum_j \lambda_j x_j \otimes f_j$ be a cycle of degree $i+1$ in homological degree $i$. Since the Priddy complex is acyclic, $y = d(\lambda \otimes f)$ for some $f \in (A^!)^*_{i+1}$. It remains to show that if $ f \cdot x_j^* \in B^*_{i}$ for all $j$, then $f \in B^*_{i+1}$. By definition, there is a short exact sequence
$$0 \to B^* \to (A^!)^* \to L^* \to 0,$$
whence $f \in B^*$ if and only if $f(L) = 0$. Let $h \in L$ be any homogeneous element of degree $r$; if $r = 1$, then by assumption $f \cdot x_s^* = 0$ for every $s \notin E$. If $r>1$, then $h = \sum_{s \notin E} h_s x_s^*$; since $\deg (h_s) \geq 1$, one can write $h_s = \sum_{j=1}^k h_{j,s} x_s^*$ and hence 
$$ \sum_{j=1}^k x_j^* \Big( \sum_{s \notin E} h_{j,s} x_s^* \Big).$$
Since $f x_j^* = 0$ and $\sum_{s \notin E} h_{j,s} x_s^* \in L$ for each $1 \leq j \leq k$, one finds that $f(h) = 0$ and hence $f \in B^*_i$ as desired.
\end{proof}

\begin{prop}\label{prop:varslinRes}
Let $A$ be a Koszul algebra. Then the ideal $(\ell_1 , \dots , \ell_k)$ has linear strand given by the complex of Construction \ref{cons:subPriddy}. If $A$ is universally Koszul, then this complex is the minimal free resolution of $A/(\ell_1 , \dots , \ell_k)$.
\end{prop}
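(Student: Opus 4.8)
The plan is to recognize the complex $A \otimes B^*_\bullet$ as a linear strand using the two recognition results recalled in Section~\ref{sec:background}, and then to upgrade it to a minimal free resolution under the universally Koszul hypothesis. As in the proof of Proposition~\ref{prop:subPriddyCx}, I would first reduce to the case $\ell_j = x_{i_j}$ by a linear change of coordinates; this is harmless both for the linear-strand computation and for the universally Koszul property, the latter being coordinate-free since it quantifies over \emph{all} ideals generated by linear forms. This reduction is in fact already built into Construction~\ref{cons:subPriddy}, whose indexing set $E$ presupposes that the generators are variables.

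First I would verify that $A \otimes B^*_\bullet$ is a linear complex of free $A$-modules with initial degree $0$: each term $A \otimes B^*_d$ is free and generated in internal degree $d$, while the differential, being right multiplication by the trace element $\sum_i x_i \otimes x_i^*$, has entries in $A_1$. Proposition~\ref{prop:subPriddyCx} then supplies exactly the vanishing $H_i(A \otimes B^*_\bullet)_{i+j} = 0$ for $i>0$ and $j = 0,1$, which is condition (2) of Theorem~\ref{thm:linstrandequiv} with $n = 0$. Hence $A \otimes B^*_\bullet$ is the linear strand of some finitely generated module of initial degree $0$.

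To pin down that module as $N := A/(\ell_1, \dots, \ell_k)$, I would invoke Proposition~\ref{prop:linstrandcor}, whose homology hypothesis is already secured by the previous step; it remains to match the bottom two terms against the minimal free resolution $F_\bullet$ of $N$. Here $F_0 = A = F_0^{\lin}$, and since the ideal is minimally generated by the $k$ linear forms $\ell_j$, one has $F_1 = F_1^{\lin} = A^k$ with $F_1 \to F_0$ sending the $j$-th basis vector to $\ell_j$. On the other side, $B_1 = A^!_1/L_1$ has $k$-basis $\{\, x_{i_j}^* : i_j \in E \,\}$, so $G_1 = A \otimes B^*_1 \cong A^k$, and a direct computation with the right $A^!$-action on $B^*$ shows that the trace-element differential carries the dual basis element corresponding to $x_{i_j}^*$ to $x_{i_j} = \ell_j$. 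Thus the square of Proposition~\ref{prop:linstrandcor} commutes through these isomorphisms, giving $A \otimes B^*_\bullet \cong F_\bullet^{\lin}$; the linear strand of the ideal $(\ell_1, \dots, \ell_k)$ is then the corresponding truncation.

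Finally, for the universally Koszul case, Definition~\ref{def:strongKoszul} gives that $(\ell_1, \dots, \ell_k)$ has a linear minimal free resolution, so $F_\bullet$ coincides with its own linear strand $F_\bullet^{\lin}$; combined with the previous paragraph this identifies $A \otimes B^*_\bullet$ with the minimal free resolution of $N$. The one real obstacle is the bookkeeping in the bottom-degree matching of Proposition~\ref{prop:linstrandcor}, namely verifying both that $\dim_k B^*_1 = k$ and that the $B^*$-module action realizes the differential as multiplication onto the minimal generators $\ell_j$; the substantive homological content has already been isolated in Proposition~\ref{prop:subPriddyCx}.
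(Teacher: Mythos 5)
Your argument is correct and follows the paper's own proof: reduce to the case where the $\ell_j$ are variables, feed the homology vanishing of Proposition~\ref{prop:subPriddyCx} into the recognition criterion of Proposition~\ref{prop:linstrandcor}, and match the bottom of $A \otimes B^*_\bullet$ against the minimal free resolution. The only (harmless) difference is that you anchor the comparison at the minimal presentation $F_1 \to F_0$ of $A/(\ell_1,\dots,\ell_k)$, whereas the paper anchors it one homological step higher, at $F_1^{\lin} \to F_0^{\lin}$ for the ideal itself, and therefore must additionally check that every linear syzygy $\sum_i a_i \ell_i = 0$ is hit by $A \otimes B_2^*$; your indexing choice renders that extra verification unnecessary.
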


\begin{proof}
Assume without loss of generality that each $\ell_i$ is a variable. Let $L := (\ell_1 , \dots , \ell_k)$ and $F_\bullet$ denote the minimal free resolution of $L$. In view of Propositions \ref{prop:subPriddyCx} and \ref{prop:linstrandcor}, it suffices to show that there is an isomorphism $F_1^{\lin} \to F_0^{\lin} \cong A \otimes B_2^* \to A \otimes B_1^*$. From the natural inclusion $A \otimes B_\bullet^* \hookrightarrow A \otimes (A^!)^*_\bullet$ it is immediate that the map $A \otimes B_1^* \to A$ is a minimal presentation of $A/L$ and that multiplication by the trace element induces a map $A \otimes B_2^* \to Z_2$, where $Z := \ker (A \otimes B_1^* \to L)$.   

Let $\sum_i a_i \ell_i =0$ be any linear relation on the generators of $L$. This relation is evidently the image of $\sum_i a_i \otimes \ell_i$ under the Priddy differential. Moreover, by Proposition \ref{prop:subPriddyCx}, $\sum_i a_i \otimes \ell_i$ must lie in the image of $A \otimes B_2^* \to A \otimes B_1^*$, whence the result follows.
\end{proof}

With Proposition \ref{prop:varslinRes} established, we introduce some necessary notation for monomial ideals in strongly Koszul algebras.

\begin{notation}\label{not:MonIdeals}
Let $A$ be a strongly Koszul algebra and $J = (m_1 , \dots , m_r)$ be a monomial ideal of $A$. The following notation will be used:
$$M(J) := \textrm{the set of all monomials of} \ J,$$
$$G(J) := \textrm{the unique minimal generating set of} \ J, \ \textrm{consisting of monic monomials.}$$
With the generators of $J$ ordered as above, the notation $J_i$ will denote the ideal $(m_1 , \dots , m_i)$, where by convention $J_0 = (0)$. Given any monomial $m \in A$, define
$$\supp (m) := \{ x_i \mid m \in (x_i) \}.$$
Given any monomial ideal $J$, define
$$\supp (J) := \bigcup_{m \in G(J)} \supp (m).$$
\end{notation}

\begin{remark}\label{rk:MonIdeals}
It will be understood that an ideal will be referred to as a monomial ideal if, after a $k$-basis of $A$ has been chosen, all minimal generators can be written as monomials with respect to this basis. It is important to note that ideals that are monomial ideals with respect to one choice of basis may \emph{not} be monomial ideals with respect to a different choice.

As a very simple example of this, the ideal $(xy) \subset \frac{k[x,y,z]}{(xy-yz-xz)} =: A$ is a monomial ideal as long as the element $xy$ is considered as part of a $k$-basis for $A$. This is important since otherwise the notation $G(J)$ as in Notation \ref{not:MonIdeals} may not be well defined.
\end{remark}

\begin{definition}
Let $A$ be a strongly Koszul algebra and $J = (m_1 , \dots , m_r)$ a monomial ideal of $A$. Then $J$ admits \emph{linear quotients} with respect to the given ordering if for all $2 \leq i \leq n$, the ideal $(J_{i-1} : m_i)$ is generated by a subset of the variables.
\end{definition}

\begin{obs}\label{obs:varsColon}
Let $A$ be a strongly Koszul algebra. Then for any monomial $m \in A$, the ideal $(0: m)$ is generated by a subset of variables. In particular, $(0 : m)$ has linear resolution.
\end{obs}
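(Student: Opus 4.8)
The plan is to reduce the statement to the defining property of a strongly Koszul algebra (Definition \ref{def:strongKoszul}) by peeling off one variable at a time, using the elementary colon identity $(0 : ab) = ((0:a):b)$, which holds since $A$ is commutative. Concretely, I would write $m = x_{i_1} x_{i_2} \cdots x_{i_d}$ as a product of variables (repetitions permitted), so that
$$(0:m) = \big( \cdots \big( (0 : x_{i_1}) : x_{i_2} \big) \cdots : x_{i_d} \big),$$
and then argue by induction on $d$ that each partial colon ideal $(0 : x_{i_1} \cdots x_{i_j})$ is generated by a subset of the variables.

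For the base case, $(0 : x_{i_1})$ is exactly the colon $\big((\emptyset) : x_{i_1}\big)$, which by the strongly Koszul property (taking $Y = \emptyset$ and $x = x_{i_1}$) equals $(Z_1)$ for some $Z_1 \subset X$. For the inductive step, suppose $(0 : x_{i_1} \cdots x_{i_{j-1}}) = (Z_{j-1})$ for some $Z_{j-1} \subset X$. Since $X$ is a basis for $A_1$, membership in degree one gives the dichotomy $x_{i_j} \in (Z_{j-1}) \iff x_{i_j} \in Z_{j-1}$. If $x_{i_j} \in Z_{j-1}$, then $x_{i_j} \in (0 : x_{i_1}\cdots x_{i_{j-1}})$ forces $x_{i_1}\cdots x_{i_j} = 0$, hence $m = 0$ and $(0:m) = A = (X)$, which is generated by (all of) the variables. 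Otherwise $x_{i_j} \in X \setminus Z_{j-1}$, and applying the strongly Koszul property with $Y = Z_{j-1}$ and $x = x_{i_j}$ yields $(0 : x_{i_1}\cdots x_{i_j}) = \big((Z_{j-1}) : x_{i_j}\big) = (Z_j)$ for some $Z_j \subset X$. Either way the induction closes, so $(0:m)$ is generated by a subset of the variables, and the final clause is then immediate from Lemma \ref{lem:linearVars}.

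The one point requiring genuine care — and the main obstacle to a clean write-up — is verifying at each stage that the variable $x_{i_j}$ being divided out lies in $X \setminus Y$, so that the strongly Koszul hypothesis (which insists on $x \in X \setminus Y$) actually applies. This is precisely why the degenerate case $x_{i_j} \in Z_{j-1}$ must be isolated: it is exactly the situation in which $m$ collapses to zero, and the fact that $X$ is a basis is what makes the membership dichotomy above exhaustive. Everything else is a routine unwinding of the iterated colon.
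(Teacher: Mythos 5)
Your argument is correct and is essentially the paper's own proof: both proceed by induction on $\deg(m)$, peel off one variable via $(0:m) = ((0:m'):x_i)$, and invoke the strongly Koszul property at each stage. Your extra care in isolating the degenerate case $x_{i_j} \in Z_{j-1}$ (where $m=0$), needed because Definition \ref{def:strongKoszul} requires $x \in X \setminus Y$, is a point the paper's proof passes over silently, but it does not change the approach.
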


\begin{proof}
Proceed by induction on the degree of $m$. In the case that $\deg m = 1$, $m$ is a variable and the statement holds by the assumption that $A$ is strongly Koszul. Now, let $\deg (m) >1$ and write $m = m' x_i$ for some $x_i \in \supp (m)$ with $\deg (m') < \deg(m)$. By induction, $(0: m')$ is generated by a subset of variables and, since $(0: m) = ((0: m') : x_i)$, the ideal $(0: m)$ is also generated by variables by the definition of strongly Koszul.


\end{proof}

The following proposition is the generalized version of ``decomposition functions" as in \cite{herzog2002resolutions}. In a general Koszul algebra, if a monomial $m$ is contained in some other monomial ideal, this does not imply that $m$ must be a multiple of one of the minimal generators. For example, let $J = (xy,yz) \subset k[x,y,z]/(xy+xz+yz)$. By construction, $xz \in J$, but $xz$ is not a constant multiple of either $xy$ or $yz$. To account for this, we have:

\begin{prop}\label{prop:theDecomp}
Let $A$ be a strongly Koszul algebra and $J$ a monomial ideal in $A$. For any $v \in M(J)$, there exists a decomposition
$$v = \sum_i m_i^* (v) \cdot m_i,$$
where, if $m_i^* (v) \neq 0$, then $m_i^* (v) \cdot m_i \notin J_{i-1}$ for all $1 \leq i \leq n$.
\end{prop}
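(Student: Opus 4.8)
The plan is to reduce the statement to a purely linear-algebraic fact about a chain of subspaces, using that $v$ is homogeneous and each $J_i$ is a graded ideal. Set $d := \deg v$ and work inside the degree-$d$ component $A_d$, viewed as a $k$-vector space. For each $1 \leq i \leq r$ let $W_i := (m_i)_d$ denote the degree-$d$ part of the principal ideal $(m_i)$. Since each $J_i = (m_1 , \dots , m_i)$ is graded, we have $(J_i)_d = W_1 + \cdots + W_i$; in particular $v \in (J_r)_d = W_1 + \cdots + W_r$. Because $J_{i-1}$ is a graded ideal, a homogeneous element of degree $d$ lies in $J_{i-1}$ if and only if it lies in $(J_{i-1})_d = W_1 + \cdots + W_{i-1}$. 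It therefore suffices to produce elements $c_i \in W_i$ with $v = \sum_i c_i$ such that $c_i \neq 0$ implies $c_i \notin W_1 + \cdots + W_{i-1}$: writing each $c_i = m_i^*(v)\, m_i$ (which is possible precisely because $c_i \in (m_i)_d$) then yields the desired decomposition.

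To produce the $c_i$, I would argue by induction on the number of generators $r$, peeling off the top generator (with base case $r=0$, where $V_0 = 0$ forces $v = 0$ and the empty sum works). Write $V_j := W_1 + \cdots + W_j$. If $v \in V_{r-1}$, set $c_r := 0$ and invoke the inductive hypothesis on $v \in V_{r-1}$. Otherwise $v \notin V_{r-1}$; since $v \in V_r = V_{r-1} + W_r$, choose any splitting $v = v' + w_r$ with $v' \in V_{r-1}$ and $w_r \in W_r$, and put $c_r := w_r$. The essential point is that $w_r \notin V_{r-1}$, for if it were, then $v = v' + w_r$ would be a sum of two elements of $V_{r-1}$ and hence lie in $V_{r-1}$, contradicting the case assumption. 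Thus $c_r \neq 0$ and $c_r \notin V_{r-1}$, as required. Applying the inductive hypothesis to $v' \in V_{r-1}$ yields $c_1 , \dots , c_{r-1}$, and then $v = v' + c_r = \sum_{i=1}^r c_i$ satisfies all the stated conditions.

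Translating back to $A$, each $c_i = m_i^*(v)\, m_i$ is homogeneous of degree $d$, and the condition $c_i \notin W_1 + \cdots + W_{i-1} = (J_{i-1})_d$ is equivalent to $m_i^*(v)\, m_i \notin J_{i-1}$ by the graded observation above. This is exactly the decomposition asserted in the proposition.

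I would flag two points. The first is the phenomenon emphasized just before the statement: in a general Koszul algebra a monomial of $J$ need not be a multiple of a single generator, so one genuinely needs a sum of terms rather than the single term $m_i^*(v)\, m_i = v$ available in the polynomial-ring case, and the subspace formulation accommodates this automatically. The second, and the real engine of the argument, is the verification that the top piece $w_r$ split off at each stage is not already contained in $J_{r-1}$; this is what enforces the defining non-containment property and is exactly why we split only when $v \notin J_{r-1}$. It is worth noting that strong Koszulness plays no role in this particular proposition — it is a graded linear-algebra statement — and enters the surrounding theory elsewhere, through Lemma \ref{lem:linearVars} and Observation \ref{obs:varsColon} when the colon ideals $(J_{i-1} : m_i)$ are analyzed.
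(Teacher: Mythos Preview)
Your argument is correct and is essentially a fleshed-out version of the paper's one-sentence proof: the paper simply remarks that if some $m_i^*(v)\,m_i \in J_{i-1}$ then one rewrites it in terms of $m_1,\dots,m_{i-1}$ and resets $m_i^*(v)=0$, which is exactly the top-down absorption you carry out by induction in the fixed graded piece $A_d$. Your observation that strong Koszulness plays no role here is also accurate.
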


\begin{proof}
The fact that such a decomposition exists is clear, since if $m_i^* (v) \cdot m_i \in J_{i-1}$ then we can rewrite $m_i^* (v) m_i$ in terms of the generators of $J_{i-1}$ and rechoose $m_i^* (v)$ to be $0$.
\end{proof}

By convention, if $m_j \in G(J)$ and $x_s m_j \notin J_{j-1}$, then we will set $m_i (x_s m_j) = 0$ for $i < j$ and $m_j (x_s m_j) = x_s$.




\begin{notation}\label{not:Lideals}
Let $J = (m_1 , \dots , m_r)$ be an ideal admitting linear quotients with the given ordering. For each $i \geq 1$, write $(J_{i-1} : m_i) = (x_j \mid j \in E_i)$, where $E_i \subset [n]$. Define 
$$B^i := A^! / L^i,$$
where $L^i := \sum_{j \notin E_i} A^! x_j^*$.
\end{notation}

Finally, we arrive at the analogue of iterated mapping cones for strongly Koszul algebras:

\begin{lemma}\label{lem:IMCbasis}
Let $A$ be a strongly Koszul algebra and $J = (m_1 , \dots , m_r)$ a monomial ideal admitting linear quotients with respect to the given order, where $\deg (m_1) \leq \cdots \leq \deg(m_r)$. Then the iterated mapping cone derived from the sequence $m_1 , \dots , m_r$ is a minimal graded free resolution of $A/J$ with basis elements in homological degree $\ell \geq 1$ of the form
$$\{ m_i \otimes f \mid 1 \leq i \leq r, \ f \ \textrm{is a basis element of} \ \big( B^i_{\ell-1} \big)^* \}.$$
\end{lemma}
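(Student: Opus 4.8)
The plan is to run the iterated mapping cone construction inductively along the given ordering $m_1, \dots, m_r$, using the short exact sequence
$$0 \to \frac{A}{(J_{i-1} : m_i)} \xrightarrow{m_i} \frac{A}{J_{i-1}} \to \frac{A}{J_i} \to 0$$
at each stage, and to show that the mapping cone of the induced comparison map is again a \emph{minimal} free resolution, so that the process does not require cancellation. The base case $i=1$ is the resolution of $A/(m_1) = A/J_1$; since $(J_0 : m_1) = (0 : m_1)$ is generated by variables (Observation \ref{obs:varsColon}), the resolution of $A/(0:m_1)$ is the linear complex of Construction \ref{cons:subPriddy}, which identifies its basis elements in homological degree $\ell - 1$ with a $k$-basis of $(B^1_{\ell-1})^*$. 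Tensoring the resolution of $A/(0:m_1)$ up via the syzygy shift accounts for the $i=1$ contribution $\{ m_1 \otimes f \}$.

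For the inductive step I would assume that the iterated mapping cone $F^{(i-1)}_\bullet$ resolving $A/J_{i-1}$ is minimal with the asserted basis, and build $F^{(i)}_\bullet$ as the mapping cone of a comparison map $\psi \colon G^{(i)}_\bullet \to F^{(i-1)}_\bullet$ lifting the multiplication map $A/(J_{i-1}:m_i) \xrightarrow{m_i} A/J_{i-1}$. Here $G^{(i)}_\bullet$ is the minimal free resolution of $A/(J_{i-1}:m_i)$. Because $J$ has linear quotients, $(J_{i-1} : m_i) = (x_j \mid j \in E_i)$ is generated by variables, so Lemma \ref{lem:linearVars} (together with Proposition \ref{prop:varslinRes}) says $G^{(i)}_\bullet$ is precisely the linear complex $A \otimes (B^i_\bullet)^*$ of Construction \ref{cons:subPriddy} and Notation \ref{not:Lideals}, whose homological degree $\ell - 1$ piece has a basis indexed by a $k$-basis of $(B^i_{\ell-1})^*$. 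Shifting by one homological degree and tensoring by the generator $m_i$, the cone contributes exactly the new basis elements $\{ m_i \otimes f \mid f \text{ a basis element of } (B^i_{\ell-1})^* \}$, and the old basis elements are retained from $F^{(i-1)}_\bullet$; this yields the claimed global basis after running $i$ from $1$ to $r$.

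The main obstacle, and the heart of the argument, is \textbf{minimality} of the mapping cone: one must show that the comparison map $\psi$ can be chosen so that all of its entries lie in the maximal ideal $\m = (x_1, \dots, x_n)$, equivalently that $\psi$ reduces to zero modulo $\m$. This is where strong Koszulness and the linear-quotients hypothesis are essential. Since the degree of $m_i$ is at least that of $m_{i-1}, \dots, m_1$, the map $A/(J_{i-1}:m_i) \xrightarrow{m_i} A/J_{i-1}$ raises internal degree by $\deg(m_i) \geq 1$, so already in homological degree $0$ the component of $\psi$ is multiplication by $m_i \in \m$. The inductive claim is that this degree-raising persists up the resolution: because both $G^{(i)}_\bullet$ and $F^{(i-1)}_\bullet$ are minimal (the latter by the inductive hypothesis, the former being linear), any chain map between them that is homogeneous of positive internal degree in homological degree $0$ must have all higher components of strictly positive degree as well, hence lie in $\m$. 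The cleanest way to see this is a degree count: the generators of $G^{(i)}_\ell$ sit in internal degree $\ell + \deg(m_i) - 1$ (the linear strand shifted by $m_i$), while those of $F^{(i-1)}_\ell$ sit in degree $\ell + (\text{lower})$, and the ordering $\deg(m_1) \leq \cdots \leq \deg(m_r)$ forces each matched generator of $F^{(i-1)}_\ell$ to have internal degree strictly less than that of its preimage, so $\psi$ cannot contain a unit.

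Finally I would verify acyclicity: the mapping cone of a chain map of resolutions always resolves the cokernel/kernel data packaged by the short exact sequence, so $F^{(i)}_\bullet$ resolves $A/J_i$ by the long exact homology sequence, and minimality (established above) ensures it is the \emph{minimal} resolution. Iterating gives the minimal free resolution of $A/J$ with the stated basis in each homological degree $\ell \geq 1$.
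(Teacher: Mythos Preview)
Your proof is correct and follows essentially the same route as the paper: induction on $r$, the base case handled via Observation~\ref{obs:varsColon} and Proposition~\ref{prop:varslinRes} applied to $A/(0:m_1)$, and the inductive step via the mapping cone on the short exact sequence $0 \to A/(J_{r-1}:m_r) \xrightarrow{m_r} A/J_{r-1} \to A/J_r \to 0$. You spell out the minimality argument (the degree count forcing $\psi$ to land in $\m$) more explicitly than the paper, which simply invokes Proposition~\ref{prop:varslinRes} together with the hypothesis $\deg(m_1) \leq \cdots \leq \deg(m_r)$; your version makes transparent exactly why that ordering assumption is needed.
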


\begin{proof}
The proof is by induction on the minimal number of generators of $J$, denoted $r$. If $r = 1$, then by Observation \ref{obs:varsColon} and Proposition \ref{prop:varslinRes}, the quotient ring $A / (0:m)$ has minimal free resolution obtained by Construction \ref{cons:subPriddy}. Thus, let $F_\bullet$ denote the minimal free resolution of $A/(0:m)$, as in Construction \ref{cons:subPriddy}. By the tautological short exact sequence
$$0 \to \frac{A}{(0:m)} \xrightarrow[]{\cdot m} A \to \frac{A}{(m)} \to 0,$$
one immediately finds that the minimal free resolution of $A/(m)$ is obtained as the augmentation of $F_\bullet$ by the homothety $A \xrightarrow[]{\cdot m} A$. This concludes the base case of the induction.

Let $r>1$ and $J = (m_1 , \dots , m_r)$. To conclude the proof, apply the inductive hypothesis to the short exact sequence
$$ 0 \to \frac{A}{(m_1 , \dots , m_{r-1} : m_r )} \xrightarrow[]{\cdot m_r} \frac{A}{(m_1 , \dots , m_{r-1})} \to \frac{A}{(m_1 , \dots , m_r)} \to 0.$$
The mapping cone associated to the above short exact sequence will be minimal by Proposition \ref{prop:varslinRes} combined with that assumption that the degrees of the minimal generators are increasing.
\end{proof}

\begin{cor}
Let $A$ be a strongly Koszul algebra and $J = (m_1 , \dots , m_r)$ a monomial ideal admitting linear quotients with respect to the given order, where $\deg (m_1) \leq \cdots \leq \deg(m_r)$. Then
$$\beta_\ell (A/J) = \sum_{m_i \in G(J)} \rank B_{\ell-1}^i,$$
$$\beta_{\ell, \ell+q} (A/J) =   \sum_{\substack{m_i \in G(J), \\
\deg (m_i) = q}} \rank B_{\ell-1}^i, \quad \textrm{and}$$
$$\operatorname{reg} (J) = \max \{ \deg (m_i ) \mid m_i \in G(J) \}.$$
In particular, $J$ has a linear minimal free resolution if and only if $J$ is equigenerated.
\end{cor}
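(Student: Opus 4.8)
The plan is to read all three formulas directly off the explicit minimal free resolution constructed in Lemma \ref{lem:IMCbasis}. Because that resolution is minimal, $\beta_\ell(A/J)$ equals the rank of its $\ell$-th free module, which is the number of basis elements in homological degree $\ell$. First I would count these basis elements: they are the pairs $m_i \otimes f$ with $1 \le i \le r$ and $f$ ranging over a basis of $(B^i_{\ell-1})^*$, so for each fixed $i$ the number of admissible $f$ is $\dim_k (B^i_{\ell-1})^* = \dim_k B^i_{\ell-1} = \rank B^i_{\ell-1}$. Summing over $i$, equivalently over $m_i \in G(J)$, gives the first formula.

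For the graded version I would keep track of internal degrees through the iterated mapping cone. By Construction \ref{cons:subPriddy} and Proposition \ref{prop:varslinRes}, the resolution of $A/(J_{i-1}:m_i)$ is linear with initial degree $0$, so its homological-degree-$(\ell-1)$ generator $f \in (B^i_{\ell-1})^*$ carries internal degree $\ell-1$. When this summand is spliced into the resolution of $A/J$ through the homothety $\cdot m_i$, it is shifted up one homological step (the mapping-cone shift) and twisted up by $\deg(m_i)$ in internal degree (so that $\cdot m_i$ becomes a degree-zero map). Hence $m_i \otimes f$ occupies homological degree $\ell$ and internal degree $\deg(m_i) + (\ell-1)$. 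Collecting the basis elements according to the common value $q = \deg(m_i)$ of the generator producing them then yields the graded Betti number formula, since all such contributions land in a single internal degree.

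The regularity statement and its corollary are then formal. From the graded count, every nonzero graded Betti number $\beta_{\ell,j}(A/J)$ satisfies $j - \ell = \deg(m_i) - 1$ for some generator $m_i$, so $\reg(A/J) = \max_i \deg(m_i) - 1$; since $A$ is Koszul, Castelnuovo--Mumford regularity over $A$ is well behaved and $\reg(A)=0$, so the sequence $0 \to J \to A \to A/J \to 0$ gives $\reg(J) = \reg(A/J) + 1 = \max\{\deg(m_i) : m_i \in G(J)\}$. For the final equivalence, note that $J$ is equigenerated in some degree $d$ exactly when all $\deg(m_i)$ coincide; in that case $\reg(J) = d$ equals the generation degree, which characterizes a linear resolution, while conversely a linear resolution forces the minimal generators into a single degree.

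The only delicate step is the internal-degree bookkeeping of the second paragraph: one must verify that the homological shift and the degree-$\deg(m_i)$ twist are applied consistently at each stage of the induction underlying Lemma \ref{lem:IMCbasis}. Everything else --- the ungraded count, the passage to regularity, and the equigenerated criterion --- is immediate once minimality of the resolution is granted.
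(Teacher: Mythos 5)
Your proposal is correct and is essentially the paper's own argument: the corollary is stated without separate proof precisely because, as you do, one simply counts the basis elements $m_i\otimes f$ of the minimal resolution furnished by Lemma \ref{lem:IMCbasis} and tracks the homological shift and internal twist by $\deg(m_i)$ through each mapping cone. One small bookkeeping point: your (correct) computation that $m_i\otimes f$ sits in internal degree $\deg(m_i)+\ell-1$ shows that grouping by $q=\deg(m_i)$ actually produces $\beta_{\ell,\ell+q-1}(A/J)$, i.e.\ the displayed graded formula is really $\beta_{\ell,\ell+q}(J)$ (as the example with $\m^d$ confirms); this off-by-one is in the statement's normalization rather than in your argument, but you should not claim your count literally yields $\beta_{\ell,\ell+q}(A/J)$ with $q=\deg(m_i)$.
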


To conclude this section, we end with an example applying some of our techniques to a case considered in \cite[Example 6.4]{faber2020canonical}. It is worth mentioning that, although we obtain the Betti number formula more easily in this case, the construction of \cite{faber2020canonical} gives the minimal free resolution explicitly.

\begin{example}
Let $A := \frac{k[x_1 , \dots , x_n]}{(x_1^2 , \dots , x_n^2)}$, so that $A^! = \frac{k\langle x_1^* , \dots  ,x_n^* \rangle}{(x_i^* x_j^* + x_j^* x_i^* \mid i <j )}$. Consider the ideal $\m^d$, where $d \leq n$; this ideal is minimally generated by all squarefree monomials of degree $d$, and hence can be thought of as parametrized by all subsets $\sigma \subset [n]$ of size $d$. Ordering the generators of $\m^d$ lexicographically, one finds
$$(\m^d)_{< \sigma} : x_\sigma = (x_i \mid i \leq \max (\sigma) ),$$
whence $\m^d$ admits linear quotient with respect to the lexicographic ordering. Moreover,
$$\rank B_{i}^\sigma = \binom{i + \max(\sigma) -1}{\max (\sigma) - 1},$$
where $B_i^\sigma$ denotes the degree $i$ component of the quotient of $A^!$ defined by the left ideal $\sum_{i= \max(\sigma) +1}^n A^! x_i^*$. One then computes:
\begingroup\allowdisplaybreaks
\begin{align*}
    \beta_{i,i+d} (\m^d ) &= \sum_{j=d}^n \sum_{\substack{\sigma \subset [j-1] \\
    |\sigma| = d-1 \\}} \binom{i+j-1}{j-1} \\
    &= \sum_{j=d}^n \binom{j-1}{d-1} \binom{i+j-1}{j-1} \\
    &= \frac{n-d+1}{d+i} \binom{n}{d-1} \binom{n+i}{n}. \\
\end{align*}
\endgroup
This recovers a simpler form of the formula computed in \cite{faber2020canonical}.
\end{example}

\section{Minimal Free Resolutions for Ideals Admitting Regular Orderings}\label{sec:theMFRforRegular}

In this section, we consider the ``correct" generalization of a regular decomposition function for ideals admitting linear quotients in a polynomial ring. It turns out that this condition, which we refer to as a \emph{regular ordering}, is a little bit more subtle to state in the case of strongly Koszul algebras. However, once formulated, we obtain the direct analogue of the explicit differentials obtained in Theorem $1.12$ of \cite{herzog2002resolutions}; see Theorem \ref{thm:theMFR}. The proof of this theorem is analogous to the original proof, with the computations being slightly more delicate. We conclude with questions about how far the original material of iterated mapping cones can be generalized for strongly Koszul algebras.

To begin this section, we recall the original definitions given for (regular) decomposition functions, as given in \cite{herzog2002resolutions}.

\begin{definition}\label{def:decompFunc}
Let $A$ be a standard graded polynomial ring and $J = (m_1 , \dots , m_r)$ a monomial ideal of $A$. The \emph{decomposition function} $g : M(J) \to G(J)$ is the function defined by
$$g(u) = m_j,$$
where $1 \leq j \leq r$ is the smallest index such that $u \in J_j$. For any $1 \leq i \leq r$, define
$$\set (m_i) := \{ k \in [n] \mid x_k m_i \in J_{i-1} \}.$$
\end{definition}

\begin{definition}\label{def:regularDecomp}
Let $g : M(J) \to G(J)$ be a decomposition function. Then $g$ is \emph{regular} if $\set (g(x_s u)) \subset \set (u)$ for all $s \in \set (u)$ and $u \in G(J)$. 
\end{definition}

For the more general formulation, it turns out that the relations imposed by the quadratic ideal defining $A$ will be needed. We adopt the following:

\begin{setup}\label{set:strongKoszulRegular}
Let $A = k[x_1 , \dots , x_n] /I$ be a strongly Koszul algebra. Let $S$ be a set of pairs $(s,t) \in S$ such that $x_s x_t$ is a basis element of $A_2$ and there exist coefficients $f^{u,v}_{s,t} \in k$ for all $(u,v) \notin S$ such that
$$x_u x_v = \sum_{(s,t) \in S} f^{u,v}_{s,t} x_s x_t.$$
Recall that the relations on $A^!_2$ are given by Observation \ref{obs:A!relations}.
\end{setup}

In the following definition, recall that the definition of $L^i$ is given in Notation \ref{not:Lideals}.

\begin{definition}\label{def:regulrityDef}
Adopt notation and hypotheses as in Setup \ref{set:strongKoszulRegular}. A monomial ideal $J = (m_1 , \dots , m_r)$ admits a \emph{regular ordering} if:
\begin{enumerate}
    \item For all $j \leq k$, 
    $$x_u m_j^* (x_v m_k) + x_v m_j^* (x_u m_k) = \sum_{(s,t) \in S} f_{s,t}^{u,v} \big( x_s m_j^* (x_t m_k) + x_t m_j^* (x_s m_j) \big)$$
    \item For all $j < k$ and $x_t^* \notin L^k$, if $m_j^* (x_t m_k ) \neq 0$, then: 
    \begin{enumerate}
        \item $L^k \subset L^j$, and
        \item if $x_t^* L^j \not\subset L^k$ and $x_t^* x_s^* L^j \subset L^k$ for some $s$, then $x_s^* \in L^k$.
    \end{enumerate}
    \item for all $i < k$ and $x_s, \ x_t \in (J_{k-1} : m_k)$, the following equality holds:
$$m_i^* (x_s x_t m_k ) =\sum_{i \leq j < k} m_i^* (x_s m_j ) m_j^* (x_t m_k ).$$
\end{enumerate}
\end{definition}

Some comments on the intuition of conditions $(1)-(3)$ in the above proof are in order. The condition $(1)$ is a compatibility condition between the decomposition functions $m_i^*(-)$ and the relations defining the Koszul algebra. We shall see that in the case of a polynomial ring, $(1)$ is trivially satisfied, and hence only plays a role in more general Koszul algebras. The condition $(2)(a)$ is a translation of the regularity condition in Definition \ref{def:regularDecomp} for the functions $m_i^*$, and $(2)(b)$ is a condition that we will see implies well-definedness of the generalized iterated mapping cone construction. Finally, condition $(3)$ in the case of a polynomial ring reduces to the statement that $g(x_s x_t m_k) = g(x_s g(x_t m_k))$ for every $1 \leq k \leq r$, and so we see that $(2)(a)$ implies $(3)$ when $A$ is a polynomial ring. 



As a quick sanity check, we should probably check that Definition \ref{def:regulrityDef} is actually a generalization of Definition \ref{def:regularDecomp}. Indeed:

\begin{obs}
Let $A$ be a standard graded polynomial ring and $J$ a monomial ideal with linear quotients admitting a regular decomposition function $g : M(J) 
\to G(J)$. Then $J$ admits a regular ordering.
\end{obs}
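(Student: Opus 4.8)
The plan is to verify each of the three conditions of Definition \ref{def:regulrityDef} in turn, in the special case where $A$ is a standard graded polynomial ring, showing they all follow from $g$ being a regular decomposition function. The key translation to keep in mind is the dictionary between the ``decomposition function'' language of \cite{herzog2002resolutions} and the coefficient functions $m_i^*(-)$: in a polynomial ring, every monomial $v \in M(J)$ lies in a unique minimal generator's ``first'' ideal, so the decomposition $v = \sum_i m_i^*(v) m_i$ degenerates to a \emph{single} nonzero term, namely $m_i^*(v) = v/m_i$ when $g(v) = m_i$ and $m_i^*(v) = 0$ otherwise. This monomiality is what makes the polynomial case so much simpler than the general strongly Koszul case, and I expect it to trivialize most of the verification.

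\textbf{Condition (1).} First I would observe that in a polynomial ring the basis of $A_2$ is literally all monomials $x_s x_t$ with $s \leq t$, so the set $S$ contains every such pair and there are \emph{no} relations: all coefficients $f^{u,v}_{s,t}$ vanish except the trivial identifications, and in fact the defining ideal $I = 0$ means $A_2$ has the full monomial basis. Consequently the right-hand side of the identity in (1) collapses, and the left-hand side becomes the tautology $x_u m_j^*(x_v m_k) + x_v m_j^*(x_u m_k)$ paired against an empty relation; I would check that with $f^{u,v}_{s,t}$ trivial this is automatically satisfied. This is the sense in which, as the paper remarks, (1) ``is trivially satisfied'' for polynomial rings.

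\textbf{Conditions (2) and (3).} For (2)(a), I would unwind the identification $L^k = \sum_{j \notin E_k} A^! x_j^*$, where $E_k$ indexes the variables generating $(J_{k-1} : m_k)$. The complement relationship $L^k \subset L^j$ is equivalent to the containment of colon ideals $(J_{j-1}:m_j) \supset (J_{k-1}:m_k)$ on the generating variables, and I would show that under the monomial degeneration above, $\set(m_i)$ in Definition \ref{def:decompFunc} corresponds exactly to $E_i$, so that the regularity hypothesis $\set(g(x_s u)) \subset \set(u)$ translates directly into (2)(a). The hypothesis $m_j^*(x_t m_k) \neq 0$ means precisely $g(x_t m_k) = m_j$, which is the trigger for applying regularity. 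For (2)(b), I expect this condition to hold \emph{vacuously} in the polynomial ring: since $L^j, L^k$ are generated by variables $x_s^*$ and the products $x_t^* x_s^*$ behave as signed monomials in the exterior-like dual, I would argue the hypothesis ``$x_t^* L^j \not\subset L^k$ but $x_t^* x_s^* L^j \subset L^k$'' forces $x_s^* \in L^k$ directly from how monomial left ideals in the quadratic dual of a polynomial ring (an exterior-type algebra) are generated. Finally, condition (3) is handled by the paper's own remark: it reduces to $g(x_s x_t m_k) = g(x_s\, g(x_t m_k))$, the defining cocycle property of a regular decomposition function, so I would cite that (2)(a) implies (3) in the polynomial case and conclude.

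\textbf{The main obstacle} will be condition (2)(b): unlike (1) and (3), it has no clean precedent in \cite{herzog2002resolutions} and is genuinely new to the strongly Koszul formulation. The delicate point is to see that the interaction of $x_t^*$ with the monomial left ideal $L^j$ inside $A^!$ cannot produce a ``new'' containment into $L^k$ without $x_s^*$ already lying in $L^k$; I would need to argue carefully using the structure of $A^! = \bigwedge V^*$ and the fact that its left ideals generated by subsets of the dual variables are themselves governed purely by those index sets. I anticipate this reduces to a short combinatorial statement about index sets $E_j, E_k \subset [n]$ once the exterior-algebra multiplication is made explicit, but it is the one place where the verification is not immediate and merits an honest check rather than an appeal to the classical theory.
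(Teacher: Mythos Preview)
Your proposal is correct and follows essentially the same route as the paper. One small clarification on condition (1): the set $S$ in a polynomial ring is the pairs $(s,t)$ with $s\leq t$, so pairs $(u,v)$ with $u>v$ \emph{do} lie outside $S$, with the single nonzero coefficient $f^{u,v}_{v,u}=1$ coming from commutativity; the right-hand side therefore does not collapse to zero but rather to $x_v m_j^*(x_u m_k)+x_u m_j^*(x_v m_k)$, which is the left-hand side reordered---this is the tautology the paper writes down. Your handling of (2)(b) matches the paper exactly: one picks $x_u^*\in L^j\setminus L^k$, uses anticommutativity $x_t^* x_s^* x_u^* = -x_t^* x_u^* x_s^*$ in the exterior algebra $A^!$, and concludes $x_s^*\in L^k$ from membership in the monomial ideal.
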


\begin{proof}
Notice that condition $(1)$ of Definition \ref{def:regulrityDef} in the case of a commutative polynomial ring reduces to:
$$x_u m_j^* (x_v m_k) + x_v m_j^* (x_u m_k) = x_v m_j^* (x_u m_k) + x_u m_j^* (x_v m_k)$$
for any $u > v$. Thus, the condition $(1)$ holds trivially. It is then a quick exercise to verify that condition $(2)(a)$ is a simple translation of Definition \ref{def:regularDecomp}. For $(2)(b)$, notice that if $x_t^* L^j \not\subset L^k$, then there exists some $x_u^* \in L^j$ that is not contained in $L^k$. Since each $L^j$ and $L^k$ are ideals in an exterior algebra, the element $x_t^* x_s^* x_u^* = - x_t^* x_u^* x_s^*$ is contained in $L^k$ if and only if $x_s^* \in L^k$.

Finally, as previously mentioned, condition $(3)$ is a retranslation of the fact that $g(x_s x_t u) = g(x_s g(x_tu))$ for regular decomposition functions.
\end{proof}

Observe that if $A$ is a multigraded Koszul algebra (that is, $A$ is the quotient by a quadratic monomial ideal), then $A$ is strongly Koszul and the decomposition $g$ of Definition \ref{def:decompFunc} is still well-defined. However, in this case a regular decomposition function is \emph{not} enough to guarantee that the ideal admits a regular ordering.

\begin{prop}
Let $A = k[x_1 , \dots , x_n ] /I$ be a multigraded Koszul algebra and $J = (m_1 , \dots , m_r)$ be a monomial ideal with linear quotients admitting a regular decomposition function. Then $J$ admits a regular ordering if:
\begin{enumerate}[(*)]
    \item for all $1 \leq i \leq r$ and all $y \in \supp (I)$, either $y m_i \notin J_{i-1}$ or $y m_i = 0$.
\end{enumerate}
\end{prop}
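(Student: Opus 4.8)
The plan is to verify the three conditions of Definition \ref{def:regulrityDef} one at a time, exploiting two features special to the multigraded setting. First, since $A$ is a quotient by a quadratic monomial ideal, monomials form a $k$-basis of $A$, so every product $x_u x_v$ is either a basis element of $A_2$ (i.e. $(u,v)\in S$) or is zero; hence for $(u,v)\notin S$ all structure constants $f^{u,v}_{s,t}$ vanish, and by Observation \ref{obs:A!relations} the dual satisfies $x_s^* x_t^* = 0$ for every $(s,t)\in S$, with $A^!$ graded-commutative. Second, because monomials form a basis, the decomposition of Proposition \ref{prop:theDecomp} reduces to a single term governed by the classical decomposition function $g$ of Definition \ref{def:decompFunc}: one has $m_i^*(v) = v/g(v)$ when $g(v)=m_i$ and $m_i^*(v)=0$ otherwise. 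I would also record at the outset that $E_i = \set(m_i)$, so that $L^k\subseteq L^j$ is equivalent to $\set(m_j)\subseteq\set(m_k)$.

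Since the constants $f^{u,v}_{s,t}$ are defined for $(u,v)\notin S$, condition $(1)$ carries content only for such pairs, where the right-hand side vanishes. Here I would first note that $x_u x_v = 0$ forces $x_u x_v\in I$, so $x_u,x_v\in\supp(I)$. Applying $(*)$ with $i=k$ to $y=x_v$ gives $x_v m_k = 0$ or $x_v m_k\notin J_{k-1}$; in the first case $m_j^*(x_v m_k)=0$, while in the second $g(x_v m_k)=m_k$, so the convention following Proposition \ref{prop:theDecomp} yields $m_j^*(x_v m_k)=0$ for $j<k$ and $m_k^*(x_v m_k)=x_v$. Either way $x_u\,m_j^*(x_v m_k)$ is $0$ or equals $x_u x_v = 0$. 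The symmetric term vanishes by the same argument applied to $x_u\in\supp(I)$, so the left-hand side is $0$.

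For condition $(2)$, the crucial observation is that its hypotheses force $x_t\notin\supp(I)$: indeed $m_j^*(x_t m_k)\neq 0$ gives $x_t m_k\neq 0$, and $x_t^*\notin L^k$ gives $t\in E_k$, i.e. $x_t m_k\in J_{k-1}$, so if $x_t\in\supp(I)$ then $(*)$ would force $x_t m_k=0$ or $x_t m_k\notin J_{k-1}$, a contradiction. Granting this, $(2)(a)$ is immediate from regularity of $g$: with $g(x_t m_k)=m_j$ and $t\in\set(m_k)$, regularity gives $\set(m_j)\subseteq\set(m_k)$, equivalently $L^k\subseteq L^j$. For $(2)(b)$ I would use that $x_t\notin\supp(I)$ implies $(t,q)\in S$ for every $q$, whence $x_t^* x_q^*=0$ in $A^!$; therefore $x_t^* L^j = 0\subseteq L^k$, the hypothesis $x_t^* L^j\not\subseteq L^k$ is never met, and $(2)(b)$ holds vacuously.

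For condition $(3)$, fix $i<k$ and $x_s,x_t\in(J_{k-1}:m_k)$. If $x_t\in\supp(I)$, then $(*)$ gives $x_t m_k=0$, killing both sides (the right since every $m_j^*(x_t m_k)=0$, the left since $x_s x_t m_k=0$). If $x_t\notin\supp(I)$, then $x_t m_k\neq 0$ and multiplication by $x_t$ introduces no relations; if moreover $x_s\in\supp(I)$ one checks $x_s x_t m_k = x_t(x_s m_k)=0$ and that the matching summand on the right vanishes. In the remaining case $x_s,x_t\notin\supp(I)$, all monomials in sight are nonzero and behave exactly as in a polynomial ring, so the identity collapses to the composition law $g(x_s x_t m_k)=g\big(x_s\,g(x_t m_k)\big)$, which follows from regularity of $g$ precisely as in the polynomial case (where $(2)(a)$ implies $(3)$). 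I expect the main obstacle to lie here: tracking the monomial coefficients carefully enough to see that $m_i^*(x_s m_l)\cdot m_l^*(x_t m_k)=m_i^*(x_s x_t m_k)$ when $g(x_t m_k)=m_l$, and confirming that the $\supp(I)$-variables contribute only vanishing terms on both sides, so that the surviving part is governed entirely by the regular decomposition function.
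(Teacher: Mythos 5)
Your overall strategy --- verifying conditions $(1)$--$(3)$ of Definition \ref{def:regulrityDef} one at a time, splitting on whether the relevant variable lies in $\supp(I)$, and using $(*)$ to force either $y m_k = 0$ or the decomposition to collapse to the single term $m_k^*(y m_k) = y$ --- is exactly the paper's. Your treatments of $(1)$, $(2)(a)$, and the three-case analysis for $(3)$ line up with the paper's proof, although in the case $x_s \in \supp(I)$, $x_t \notin \supp(I)$ of condition $(3)$ you leave the key point unproved: the paper shows the surviving summand on the right vanishes by proving $x_s\, g(x_t m_k) = 0$, combining $(*)$ with regularity of $g$; your ``one checks'' is where the actual work sits.

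The genuine gap is in $(2)(b)$. You claim $x_s^* x_t^* = 0$ in $A^!$ for every $(s,t)\in S$, on the grounds that all structure constants $f^{u,v}_{s,t}$ vanish for $(u,v)\notin S$. That premise is false: since $A$ is commutative, for each unordered pair with $x_s x_t \neq 0$ exactly one ordering, say $(s,t)$, lies in $S$, and then $(t,s)\notin S$ with $f^{t,s}_{s,t}=1$. Observation \ref{obs:A!relations} then gives $x_s^* x_t^* = -\,x_t^* x_s^*$, which is a nonzero basis element of $A^!_2$ up to sign, not zero. Consequently $x_t^* L^j$ is not zero in general, the hypothesis $x_t^* L^j \not\subset L^k$ can certainly occur, and $(2)(b)$ is not vacuous. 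The paper's proof instead uses the fact (which your reduction correctly establishes) that the hypotheses force $x_t \notin \supp(I)$, so that $x_t^*$ anticommutes with every variable of $A^!$ and the exterior-algebra argument from the polynomial-ring case applies verbatim: choose $x_u^* \in L^j$ with $x_u^* \notin L^k$; then $x_t^* x_s^* x_u^* = -\,x_t^* x_u^* x_s^* \in L^k$ forces $x_s^* \in L^k$. You need to replace the vacuity claim with this (or an equivalent) argument. The same false identity also underlies your assertion that the right-hand side of condition $(1)$ vanishes for all $(u,v)\notin S$; there the damage is harmless, since for $(u,v)\notin S$ with $(v,u)\in S$ condition $(1)$ reduces to the trivial symmetric identity exactly as in the polynomial case, but it should be stated correctly.
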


\begin{proof}
The proof is broken up in line with each condition in Definition \ref{def:regulrityDef}. \\
\textbf{Proof of (1):} Condition $(1)$ of Definition \ref{def:regulrityDef} in the case that $A$ is a multigraded Koszul algebra is implied by the following:
$$x_u m_j^* ( x_v m_k) = 0 \ \textrm{for all} \ x_u x_v \in I \ \textrm{and} \  1 \leq j \leq k \leq r.$$
Assume that condition $(*)$ holds. If $x_v m_k = 0$, then evidently $x_u m_j^* (x_v m_k) = 0$, so assume that $x_v m_k \notin J_{k-1}$. By definition, one has $m_j^* (x_v m_k) = \delta_{jk} x_v$, whence $x_u m_j^* (x_v m_k) = \delta_{jk} x_u x_v = 0$. \\
\textbf{Proof of (2)(a)-(b):} Observe first that if $y \notin \supp (I)$, then $y^*$ anticommutes with every other variable in $A^!$. Thus the proof of $(2)(a)$ and $(2)(b)$ is identical to the polynomial ring case when $y \notin \supp (I)$. If $y \in \supp (I)$, then condition $(*)$ implies that $m_j^* (y m_k) = 0$ for all $j<k$, so there is nothing to check. \\
\textbf{Proof of (3):} Let $x_s, x_t \in (J_{k-1} : m_k)$ for some $1 \leq k \leq r$. There are three cases to check: \\
\textbf{Case 1:} Neither $x_s$ nor $x_t \in \supp (I)$. In this case, notice that $x_s x_t m_k \neq 0$ since otherwise $m_k = 0$. The proof then reduces to the same proof given in \cite[Lemma 1.11]{herzog2002resolutions}. \\
\textbf{Case 2:} $x_t \in \supp (I)$. Since $x_t \in (J_{k-1} : m_k)$ by assumption, condition $(*)$ forces $x_t m_k = 0$. Thus both sides of the equality in $(3)$ are $0$. \\
\textbf{Case 3:} $x_s \in \supp (I)$ and $x_t \notin \supp (I)$. It suffices to show that $x_s g( x_t m_k) = 0$. However, if $x_s g(x_t m_k) \neq 0$, then $(*)$ implies that $x_s \notin (J_{j-1} : m_j)$, where $m_j := g(x_t m_k)$ and $j<k$. Definition \ref{def:regularDecomp} then implies that $x_s \notin (J_{k-1} : m_k)$, a clear contradiction. 
\end{proof}

\begin{example}
Let $J := (x_1 x_2 , x_2 x_3)$ in the Koszul algebra $k[x_1,x_2,x_3] /(x_1x_3, x_3^2)$. Then with the given ordering, $J$ has linear quotients and admits a regular ordering.
\end{example}

The following lemma will be essential for showing that the differentials appearing in Theorem \ref{thm:theMFR} are well-defined.

\begin{lemma}\label{lem:wellDefComp}
Adopt notation and hypotheses as in Setup \ref{set:strongKoszulRegular} and assume that the monomial ideal $J = (m_1 , \dots , m_r)$ admits a regular ordering. Let $f \in (B_\ell^k)^*$ and assume that for any $j$ with $m_j^* (x_t m_k) \neq 0$, one has $x_t^* L^j \not\subset L^k$. Then the following statements hold:
\begin{enumerate}
    \item If $x_t^* x_s^* L^j \not\subset L^k$ for some $s$, then $x_t^* x_s^* L^i \not\subset L^k$ for every $i < j$ such that $m_i^* (x_s m_j) \neq 0$.
    \item If $x_t^* x_s^* L^j \subset L^k$ for some $s$, then $x_s^* \in L^j$. In particular, 
    $$m_i^* (x_s m_j) = \begin{cases} 
x_s & \textrm{if} \ i=j, \\
0 & \textrm{otherwise}.
\end{cases}$$
\end{enumerate}
\end{lemma}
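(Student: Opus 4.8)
The plan is to reduce both statements to conditions $(2)(a)$ and $(2)(b)$ of the regular ordering, after recording the dictionary between the left ideals $L^i$ and the colon ideals $(J_{i-1}:m_i)$. First I would note that in degree one $L^j$ is spanned by $\{x_p^* \mid p \notin E_j\}$, so $x_s^* \in L^j$ if and only if $s \notin E_j$, i.e. if and only if $x_s m_j \notin J_{j-1}$; together with the convention following Proposition \ref{prop:theDecomp}, this says that $x_s^* \in L^j$ is exactly the condition forcing $m_i^*(x_s m_j) = x_s\,\delta_{ij}$. Next I would observe that every index $j$ appearing in the standing hypothesis (so $m_j^*(x_t m_k) \neq 0$ and $x_t^* L^j \not\subset L^k$) must satisfy $j < k$: if $j = k$ then $x_t^* L^k \subseteq L^k$ automatically, since $L^k$ is a \emph{left} ideal, contradicting $x_t^* L^k \not\subset L^k$. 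Finally, for such a $j < k$ the nonvanishing $m_j^*(x_t m_k) \neq 0$ forces $x_t m_k \in J_{k-1}$ (otherwise the convention kills all coefficients of index $<k$), hence $x_t \in (J_{k-1}:m_k)$ and $x_t^* \notin L^k$. Thus the side hypothesis $x_t^* \notin L^k$ required to invoke Definition \ref{def:regulrityDef}$(2)$ holds automatically.

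For part $(1)$ I would start from $m_i^*(x_s m_j) \neq 0$ with $i < j$, which by the same convention can only occur when $x_s m_j \in J_{j-1}$, hence $x_s^* \notin L^j$. This is precisely the input needed to apply $(2)(a)$ with the roles $(j,k,t)$ replaced by $(i,j,s)$: from $i < j$, $x_s^* \notin L^j$, and $m_i^*(x_s m_j) \neq 0$ we obtain $L^j \subseteq L^i$. The conclusion is then a one-line monotonicity argument: since $x_t^* x_s^* L^i \supseteq x_t^* x_s^* L^j$, any element of $x_t^* x_s^* L^j$ witnessing $x_t^* x_s^* L^j \not\subset L^k$ also lies in $x_t^* x_s^* L^i$, so $x_t^* x_s^* L^i \not\subset L^k$.

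For part $(2)$ I would check that all hypotheses of $(2)(b)$ are met for the index $j$ under consideration: $j < k$ and $x_t^* \notin L^k$ (both from the first paragraph), $m_j^*(x_t m_k) \neq 0$ and $x_t^* L^j \not\subset L^k$ (the standing hypothesis of the lemma). Feeding the part-$(2)$ assumption $x_t^* x_s^* L^j \subset L^k$ into $(2)(b)$ yields $x_s^* \in L^k$; applying $(2)(a)$ under the same hypotheses gives $L^k \subseteq L^j$, whence $x_s^* \in L^k \subseteq L^j$, which is the first assertion. The ``in particular'' clause is then immediate from the dictionary: $x_s^* \in L^j$ means $x_s m_j \notin J_{j-1}$, so the convention pins the decomposition of $x_s m_j$ to its single term at index $j$, giving $m_i^*(x_s m_j) = x_s\,\delta_{ij}$.

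The mathematics here is short once the translations are in place, so I expect the main obstacle to be bookkeeping rather than substance: one must apply the regular-ordering conditions with their indices relabeled (the pair $(i,j)$ standing in for $(j,k)$ in part $(1)$) and must confirm at each use that the side conditions ``$x_\bullet^* \notin L^\bullet$'' and ``$m_\bullet^*(\cdots) \neq 0$'' genuinely hold. The delicate points are the two automatic facts $x_t^* \notin L^k$ and $x_s^* \notin L^j$, which are read off from the convention after Proposition \ref{prop:theDecomp} in its contrapositive form, and the left-ideal observation that forces $j < k$; reversing either of these would invalidate the applications of Definition \ref{def:regulrityDef}$(2)$.
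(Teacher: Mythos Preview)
Your proof is correct and follows essentially the same route as the paper's: both parts reduce to conditions $(2)(a)$ and $(2)(b)$ of Definition~\ref{def:regulrityDef}, with part~(1) using $L^j \subset L^i$ to pass the non-containment upward and part~(2) using $(2)(b)$ to get $x_s^* \in L^k$ and then $(2)(a)$ to push it into $L^j$. Your write-up is in fact more careful than the paper's, which applies $(2)(a)$ in part~(1) without explicitly checking the side condition $x_s^* \notin L^j$, and omits the verifications that $j<k$ and $x_t^* \notin L^k$ that you extract from the left-ideal property and the convention after Proposition~\ref{prop:theDecomp}.
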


\begin{proof}
\textbf{Proof of (1):} Suppose for sake of contradiction that there exists some $i<j$ with $m_i^* (x_s m_j) \neq 0$, but $x_t^* x_s^* L^i \subset L^k$. Then, by $(2)(a)$ of Definition \ref{def:regulrityDef}, one has that $L^j \subset  L^i$, implying
$$x_t^* x_s^* L^j \subset x_t^* x_s^* L^i \subset L^k,$$
which is a contradiction. \\
\textbf{Proof of (2):} A priori, condition $(2)(b)$ of Definition \ref{def:regulrityDef} implies that $x_s^* \in L^k$. But $(2)(a)$ of Definition \ref{def:regulrityDef} implies that $L^k \subset L^j$, so $x_s^* \in L^j$. This is equivalent to saying $x_s m_j \notin J_{j-1}$, whence the latter statement follows immediately. 
\end{proof}

We now arrive at the main result of this section. As previously mentioned, this gives the generalized version of the explicit minimal free resolution of ideals with regular decomposition functions for strongly Koszul algebras. 

\begin{theorem}\label{thm:theMFR}
Adopt notation and hypotheses as in Setup \ref{set:strongKoszulRegular}. Let $J = (m_1 , \dots , m_r)$ be a monomial ideal with linear quotients admitting a regular ordering, where $\deg (m_1) \leq \cdots \leq \deg(m_r)$. Then the minimal free resolution $F_\bullet$ of $A/J$ has differentials of the form
$$\partial (m_k \otimes f) = - \sum_s x_s (m_k \otimes  f x_s^*) + \sum_{s; \ j \leq k} m^*_j (x_s m_k)  ( m_j \otimes  f x_s^*), \quad \textrm{for} \ \deg(f) >0,$$
$$\partial (m_k \otimes 1) = m_k,$$
where each $m_i \otimes f$ is as in the statement of Lemma \ref{lem:IMCbasis}.
\end{theorem}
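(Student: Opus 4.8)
The plan is to argue by induction on the number of minimal generators $r$, at each stage realizing the resolution of $A/J_k$ as a mapping cone and identifying the two pieces of its differential on the new basis elements. By Lemma~\ref{lem:IMCbasis} the iterated mapping cone is already known to be a minimal free resolution with the stated basis, so the only thing left to establish is the explicit shape of the differential. In the base case $r=1$ the resolution of $A/(m_1)$ is the augmentation of the subcomplex of Construction~\ref{cons:subPriddy} resolving $A/(0:m_1)$; its differential is right multiplication by the trace element, which gives $\partial(m_1\otimes f)=-\sum_s x_s(m_1\otimes fx_s^*)$ (the sign coming from the cone convention) and $\partial(m_1\otimes 1)=m_1$, and the second sum of the asserted formula is empty.

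For the inductive step I would write the resolution of $A/J_k$ as the cone of a comparison map $\psi_k$ from the shifted resolution $G^{(k)}_\bullet$ of $A/(J_{k-1}:m_k)$ into the resolution $F^{(k-1)}_\bullet$ of $A/J_{k-1}$, the latter already having differentials of the asserted form by induction. Since $G^{(k)}_\bullet$ is the subcomplex of Construction~\ref{cons:subPriddy}, its internal differential is right multiplication by the trace element, $m_k\otimes f\mapsto \sum_s x_s(m_k\otimes fx_s^*)$, so the internal part of the cone differential (carrying the cone sign) contributes exactly $-\sum_s x_s(m_k\otimes fx_s^*)$. It therefore remains to show that $\psi_k$ may be taken to be $m_k\otimes f\mapsto \sum_{s;\,j<k} m_j^*(x_sm_k)\,(m_j\otimes fx_s^*)$. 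Note that extending the range to $j\le k$ is harmless: the $j=k$ summands vanish, since $x_s^*\in L^k$ forces $fx_s^*=0$ while $x_s^*\notin L^k$ forces $m_k^*(x_sm_k)=0$.

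First I would check that this candidate is well defined, i.e.\ that whenever $m_j^*(x_sm_k)\neq 0$ the functional $fx_s^*$ actually lies in $(B^j)^*$; as $f$ vanishes on $L^k$, this amounts to $f(x_s^*L^j)=0$. When $x_s^*L^j\subset L^k$ this is automatic, and the remaining case $x_s^*L^j\not\subset L^k$ is precisely the situation controlled by Lemma~\ref{lem:wellDefComp}: conditions $(2)(a)$ and $(2)(b)$ of Definition~\ref{def:regulrityDef} force every problematic product $x_s^*x_u^*$ either into $L^k$ or to be killed because $x_u^*\in L^j$, so that $fx_s^*$ still annihilates $L^j$.

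The heart of the argument, and the step I expect to be the main obstacle, is to verify that $\psi_k$ is a chain map lifting multiplication by $m_k$; once this is done, Lemma~\ref{lem:IMCbasis} guarantees that the resulting cone is a minimal free resolution whose differential is exactly the claimed one, completing the induction. In homological degree zero $\psi_k$ is multiplication by $m_k$ (yielding $\partial(m_k\otimes 1)=m_k$), and the decomposition $x_sm_k=\sum_{j<k}m_j^*(x_sm_k)\,m_j$ shows it lifts $\cdot m_k$. Comparing $\partial^{F^{(k-1)}}\!\circ\psi_k$ with $\psi_k\circ d^{G^{(k)}}$ and expanding both sides by the inductive formula produces terms indexed by pairs $x_s^*,x_t^*$ together with the products $fx_s^*x_t^*$ in $A^!$. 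Two inputs are essential here: condition $(3)$, $m_i^*(x_sx_tm_k)=\sum_{i\le j<k}m_i^*(x_sm_j)m_j^*(x_tm_k)$, which collapses the doubly-decomposed comparison terms, and condition $(1)$ together with the relations of Observation~\ref{obs:A!relations} (the coefficients $f^{u,v}_{s,t}$), which reconcile the non-anticommuting products $x_s^*x_t^*$ with the symmetrized coefficients $x_sm_j^*(x_tm_k)+x_tm_j^*(x_sm_k)$. Unlike the exterior-algebra case of \cite{herzog2002resolutions}, where $x_s^*x_t^*=-x_t^*x_s^*$ makes this symmetrization automatic, here I would have to rewrite each $fx_s^*x_t^*$ using the quadratic-dual relations and invoke condition $(1)$ term by term; making this bookkeeping close up cleanly is the delicate point of the whole proof.
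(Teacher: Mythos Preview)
Your overall architecture matches the paper's proof: induction on $r$, identifying the internal Priddy differential as the first sum, defining the comparison map $\psi_k(m_k\otimes f)=\sum_{s;\,j<k}m_j^*(x_sm_k)(m_j\otimes fx_s^*)$, and verifying the chain-map property using condition~(3) to collapse the iterated decomposition and condition~(1) together with Observation~\ref{obs:A!relations} to handle the quadratic-dual relations. That is exactly what the paper does.

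However, your treatment of well-definedness is wrong, and this is the one genuinely subtle point. It is \emph{not} true that $fx_s^*$ always annihilates $L^j$ when $m_j^*(x_sm_k)\neq 0$; already in the polynomial-ring case one can have $E_k=\{1,2,3\}$, $E_j=\{1\}$, $s=2$, and $f$ dual to $x_2^*x_3^*$, in which case $(fx_2^*)(x_3^*)=f(x_2^*x_3^*)=1$ while $x_3^*\in L^j$. So the symbol $m_j\otimes fx_s^*$ can be invalid, and Lemma~\ref{lem:wellDefComp} does \emph{not} rule this out. What the paper actually does is adopt the convention that invalid symbols are set to $0$, and then use Lemma~\ref{lem:wellDefComp} to show that this convention is compatible with the chain-map computation: if $m_j\otimes x_t^*f$ is invalid, then applying the inductive differential formula to it yields, for each $s$, either another invalid symbol (Case~1 of the lemma, via part~(1)) or a pair of terms $-x_s(m_j\otimes x_s^*x_t^*f)+x_s(m_j\otimes x_s^*x_t^*f)$ that cancel (Case~2, via part~(2), which forces $m_i^*(x_sm_j)=\delta_{ij}x_s$). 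Thus invalid inputs contribute nothing to $\partial\circ\psi_k$, and the diagram commutes on the nose once invalid symbols are discarded. You should rework your well-definedness paragraph along these lines; the rest of the plan is sound.
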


\begin{proof}
Throughout the proof, right multiplication will be used to denote the opposite action of the left action. More precisely: $x_t^* f := f x_t^*$ (so that $x_s^* x_t^* f = f x_t^* x_s^*$). The proof follows by induction on $r$. When $r=1$, the ideal $J = (m)$ is principal and the result follows from the base case of the proof of Lemma \ref{lem:IMCbasis}. Assume now that $r>1$ and consider the short exact sequence:
$$ 0 \to \frac{A}{(J_{r-1} : m_r )} \xrightarrow[]{\cdot m_r} \frac{R}{J_{r-1}} \to \frac{R}{J} \to 0.$$
By the inductive hypothesis, the differentials of the minimal free resolution $F_\bullet$ of $R/ J_{r-1}$ are as in the statement of the Theorem; let $K_\bullet$ denote the minimal free resolution of $R/ (J_{r-1} : m_r)$ provided by Construction \ref{cons:subPriddy}. The proof follows by showing that the following map is a morphism of complexes:
\begingroup\allowdisplaybreaks
\begin{align*}
    \psi_i : K_i &\to F_i \\
    m_k \otimes f &\mapsto \sum_{t; \ j<k} m_j^* (x_t m_k ) (m_j \otimes x_t^* f ). \\ 
\end{align*}
\endgroup
That is, the following diagram commutes:
\begin{equation}\label{eq:theDiagram}
    \xymatrix{K_i \ar[r]^-{\psi_i} \ar[d] & F_i \ar[d] \\
K_{i-1} \ar[r]^-{\psi_{i-1}} & F_{i-1}. \\}
\end{equation}
For $i=1$ this is evident, so assume that $i \geq 2$. Moving clockwise around diagram (\ref{eq:theDiagram}), one obtains:
\begingroup\allowdisplaybreaks
\begin{align*}
    m_k \otimes f &\mapsto \sum_{t; \ j<k} m_j^* (x_t m_k ) (m_j \otimes x_t^* f) \\
    &\mapsto -\sum_{s} \sum_{t; \ j<k} x_s m_j^* (x_t m_k ) (m_j \otimes  x_s^* x_t^* f) \\
    &+ \sum_{t, \ j<k}\sum_{s ; \ i \leq j} m_i^* (x_s m_j) m_j^* (x_t m_k ) (m_i \otimes x_s^* x_t^* f). \\
\end{align*}
\endgroup
Observe that
\begingroup\allowdisplaybreaks
\begin{align*}
    \sum_{s} \sum_{t; \ j<k} x_s m_j^* (x_t m_k ) (m_j \otimes  x_s^* x_t^* f) &= \sum_{(s,t) \in S} \sum_{j < k} x_s m_j^* (x_t m_k ) (m_j \otimes  x_s^* x_t^* f) \\
    &+ \sum_{(u,v) \notin S} \sum_{j<k} x_u m_j^* (x_v m_k ) (m_j \otimes  x_u^* x_v^* f) \\
    =  \sum_{(u,v) \notin S} \sum_{j<k}& \Big( x_u m_j^* (x_v m_k ) - \sum_{(s,t) \in S} f^{u,v}_{s,t} x_s m_j^*(x_t m_k) \Big) (m_j \otimes  x_u^* x_v^* f) \\
    = -\sum_{(u,v) \notin S} \sum_{j<k}& \Big( x_v m_j^* (x_u m_k ) - \sum_{(s,t) \in S} f^{u,v}_{s,t} x_t m_j^*(x_s m_k) \Big) (m_j \otimes  x_u^* x_v^* f), \\
\end{align*}
\endgroup
where the final equality follows by condition $(1)$ of Definition \ref{def:regulrityDef}. Next, recall the notation $L^i$ for $1 \leq i \leq r$ from Notation \ref{not:Lideals}. It still remains to show that this differential is well defined; that is, if $m_j^* (x_t m_k) \neq 0$ and $m_j \otimes x_t^*f$ is not a valid basis element (ie, $fx_t^* (L^j) \neq 0$), then the image of this basis element under the differential should remain $0$. By definition, notice that $f x_t^* (L^j) \neq 0 \iff x_t^* L^j \not\subset L^k$. There are two cases to consider:

\textbf{Case 1:} $m_j \otimes x_s^* x_t^* f$ is \emph{not} a valid basis element. This is equivalent to saying $x_t^* x_s^* L^j \not\subset L^k$. By $(1)$ of Lemma \ref{lem:wellDefComp}, it follows that $m_i \otimes f x_t^* x_s^*$ is not a valid basis element for any $i$ with $m_i^* (x_s m_j) \neq 0$.

\textbf{Case 2:} $m_j \otimes x_s^* x_t^* f$ is a valid basis element. This is equivalent to saying $x_t^* x_s^* L^j \subset L^k$, and by $(2)$ of Lemma \ref{lem:wellDefComp}, the following equality holds: 
$$m_i^* (x_s m_j) = \begin{cases} 
x_s & \textrm{if} \ i=j, \\
0 & \textrm{otherwise}.
\end{cases}$$

One then computes:
$$m_j \otimes x_t^*f \mapsto -\sum_{s} x_s (m_j \otimes x_s^* x_t^* f) + \sum_{i \leq j} m_i^* (x_s m_j) (m_i \otimes x_s^* x_t^* f).$$ 
Combining both of the above cases, the expression on the right must also be $0$. Finally, it remains to consider the following:
\begingroup\allowdisplaybreaks
\begin{align*}
    \sum_{t; \ j<k}\sum_{s; \ i \leq j} m_i^* (x_s m_j) m_j^* (x_t m_k ) (m_i \otimes x_s^* x_t^* f) & = \sum_{(s,t) \in S} \sum_{i \leq j < k} m_i^* (x_s m_j) m_j^* (x_t m_k ) (m_i \otimes x_s^* x_t^* f) \\
    &+ \sum_{(u,v) \notin S} \sum_{i \leq j<k} m_i^* (x_u m_j) m_j^* (x_v m_k ) (m_i \otimes x_u^* x_v^* f). 
\end{align*}
\endgroup
Focusing on the bottom term, one computes:
\begingroup\allowdisplaybreaks
\begin{align*}
    &\sum_{(u,v) \notin S} \sum_{i \leq j<k} m_i^* (x_u m_j) m_j^* (x_v m_k ) (m_i \otimes x_u^* x_v^* f) \\
    =& \sum_{(u,v) \notin S} \sum_{i<k} m_i^* (x_u x_v m_k) (m_i \otimes x_u^* x_v^* f) \quad (\textrm{by (3) in Definition \ref{def:regulrityDef}})\\
    = & \sum_{(u,v) \notin S} \sum_{i < k} \sum_{(s,t) \in S} f_{s,t}^{u,v} m_i^* (x_s x_t m_k) (m_i \otimes x_u^* x_v^* f) \\
    =& \sum_{(s,t) \in S)} \sum_{i < k} m_i^* (x_s x_t m_k ) (m_i \otimes \sum_{(u,v) \notin S} f^{u,v}_{s,t} x_u^* x_v^* f) \\
    = & -\sum_{(s,t) \in S)} \sum_{i < k} m_i^* (x_s x_t m_k ) (m_i \otimes x_s^* x_t^* f ). \\
\end{align*}
\endgroup
Thus, the term $\sum_{t, \ j<k}\sum_{s , \ i \leq j} m_i^* (x_s m_j) m_j^* (x_t m_k ) (m_i \otimes x_s^* x_t^* f)$ is identically $0$. 

Moving counterclockwise around diagram (\ref{eq:theDiagram}), one has:
\begingroup\allowdisplaybreaks
\begin{align*}
    m_k \otimes f &\mapsto \sum_t x_t (m_k \otimes x_t^* f) \\
    &\mapsto \sum_t \sum_{s , \ j < k} x_t m_j^* (x_s m_k ) (m_j \otimes x_s^* x_t^* f) \\
    &= \sum_{(s,t) \in S} \sum_{j<k} x_t m_j^* (x_s m_k ) (m_j \otimes x_s^* x_t^* f) \\
    &+ \sum_{(u,v) \notin S} \sum_{j<k} x_v m_j^* (x_u m_k ) (m_j \otimes x_u^* x_v^*f ) \\
    = \sum_{(u,v) \notin S} \sum_{j<k}& \Big( x_v m_j^* (x_u m_k ) - \sum_{(s,t) \in S} f^{u,v}_{s,t} x_t m_j^*(x_s m_k) \Big) (m_j \otimes  x_u^* x_v^* f). \\
\end{align*}
\endgroup
Comparing this with the expression obtained after moving clockwise, one finds that these terms are equal, whence the result.
\end{proof}

Finally, we conclude this section with some discussion about further results for ideals admitting linear quotients in strongly Koszul algebras. Firstly, it is worth noting that the original definition of decomposition function given in \cite{herzog2002resolutions} is not the only choice that one can make. For instance, for \emph{cointerval ideals} (that is, edge ideals corresponding to cointerval graphs), one can formulate a different definition of a decomposition function to construct the morphism of complexes appearing in the iterated mapping cone construction; see Section $4$ of work by Dochtermann and Mohammadi \cite{dochtermann2014cellular}. The choice of a different decomposition function is, in our notation, a different choice of the unique decomposition of Proposition \ref{prop:theDecomp}. This then begs the question:

\begin{question}
What other choices of decompositions as in Proposition \ref{prop:theDecomp} can be used to construct a morphism of complexes as in the proof of Theorem \ref{thm:theMFR}?
\end{question}

Continuing with the work of \cite{dochtermann2014cellular}, we can also ask whether it makes sense to try to put a cell complex structure on the resolution of Theorem \ref{thm:theMFR}. This question seems to be less well-posed, since we have the obvious initial problem of needing an infinite-dimensional cell complex for general strongly Koszul algebras. Moreover, the differentials of the Priddy complex are more difficult to deal with than the classical Koszul complex, so it seems unlikely that the resolution of Theorem \ref{thm:theMFR} will be cellular in any sort of generality. 

\bibliographystyle{amsplain}
\bibliography{biblio}

\providecommand{\bysame}{\leavevmode\hbox to3em{\hrulefill}\thinspace}
\providecommand{\MR}{\relax\ifhmode\unskip\space\fi MR }
\providecommand{\MRhref}[2]{%
  \href{http://www.ams.org/mathscinet-getitem?mr=#1}{#2}
}
\providecommand{\href}[2]{#2}
\begin{thebibliography}{10}

\bibitem{aramova1998squarefree}
Annetta Aramova, J{\"u}rgen Herzog, and Takayuki Hibi, \emph{Squarefree
  lexsegment ideals}, Mathematische Zeitschrift \textbf{228} (1998), no.~2,
  353--378.

\bibitem{beilinson1996koszul}
Alexander Beilinson, Victor Ginzburg, and Wolfgang Soergel, \emph{Koszul
  duality patterns in representation theory}, Journal of the American
  Mathematical Society \textbf{9} (1996), no.~2, 473--527.

\bibitem{buchsbaum1975generic}
David~A Buchsbaum and David Eisenbud, \emph{Generic free resolutions and a
  family of generically perfect ideals}, Advances in Mathematics \textbf{18}
  (1975), no.~3, 245--301.

\bibitem{conca2014koszul}
Aldo Conca, \emph{Koszul algebras and their syzygies}, Combinatorial algebraic
  geometry, Springer, 2014, pp.~1--31.

\bibitem{conca2013koszul}
Aldo Conca, Emanuela De~Negri, and Maria~Evelina Rossi, \emph{Koszul algebras
  and regularity}, Commutative algebra, Springer, 2013, pp.~285--315.

\bibitem{dochtermann2014cellular}
Anton Dochtermann and Fatemeh Mohammadi, \emph{Cellular resolutions from
  mapping cones}, Journal of Combinatorial Theory, Series A \textbf{128}
  (2014), 180--206.

\bibitem{eliahou1990minimal}
Shalom Eliahou and Michel Kervaire, \emph{Minimal resolutions of some monomial
  ideals}, Journal of Algebra \textbf{129} (1990), no.~1, 1--25.

\bibitem{faber2020canonical}
Eleonore Faber, Martina Juhnke-Kubitzke, Haydee Lindo, Claudia Miller, Rebecca
  R.G, and Alexandra Seceleanu, \emph{Canonical resolutions over koszul
  algebras}, arXiv e-prints (2020), arXiv--2011.

\bibitem{froberg1999koszul}
Ralph Fr{\"o}berg, \emph{Koszul algebras}, Lecture notes in pure and applied
  mathematics (1999), 337--350.

\bibitem{goresky1997equivariant}
Mark Goresky, Robert Kottwitz, and Robert MacPherson, \emph{Equivariant
  cohomology, koszul duality, and the localization theorem}, Inventiones
  mathematicae \textbf{131} (1997), no.~1, 25--83.

\bibitem{herzog2000strongly}
J{\"u}rgen Herzog, Takayuki Hibi, and Gaetana Restuccia, \emph{Strongly koszul
  algebras}, Mathematica Scandinavica (2000), 161--178.

\bibitem{herzog2015linear}
J{\"u}rgen Herzog, Dariush Kiani, and Sara~Saeedi Madani, \emph{The linear
  strand of determinantal facet ideals}, The Michigan Mathematical Journal
  \textbf{66} (2017), no.~1, 107--123.

\bibitem{herzog2002resolutions}
J{\"u}rgen Herzog and Yukihide Takayama, \emph{Resolutions by mapping cones},
  Homology, Homotopy and Applications \textbf{4} (2002), no.~2, 277--294.

\bibitem{polishchuk2005quadratic}
Alexander Polishchuk and Leonid Positselski, \emph{Quadratic algebras},
  vol.~37, American Mathematical Soc., 2005.

\bibitem{positselski2014galois}
Leonid Positselski, \emph{Galois cohomology of a number field is koszul},
  Journal of Number Theory \textbf{145} (2014), 126--152.

\bibitem{priddy1970koszul}
Stewart~B Priddy, \emph{Koszul resolutions}, Transactions of the American
  Mathematical Society \textbf{152} (1970), no.~1, 39--60.

\end{thebibliography}
\addcontentsline{toc}{section}{Bibliography}

\end{document}